\newtheorem{theorem}{Theorem}
\theoremstyle{plain}
\newtheorem{corollary}{Corollary}
\newtheorem{definition}{Definition}
\newtheorem{proposition}{Proposition}
\newtheorem{remark}{Remark}
\numberwithin{equation}{section}
\begin{document}
\title[Three Solutions To a Steklov Problem]{Three Solutions To a Steklov
Problem Involving The Weighted $p\left( .\right) $-Laplacian }
\author{Ismail AYDIN}
\address{Sinop University\\
Faculty of Arts and Sciences\\
Department of Mathematics\\
Sinop, TURKEY}
\email{iaydin@sinop.edu.tr}
\urladdr{}
\thanks{}
\author{Cihan UNAL}
\address{Assessment, Selection and Placement Center\\
Ankara, TURKEY}
\email{cihanunal88@gmail.com}
\urladdr{}
\thanks{}
\subjclass[2000]{Primary 35D30, 46E35; Secondary 35J60, 35J70}
\keywords{$p\left( .\right) $-Laplacian, Weighted variable exponent Sobolev
spaces, Steklov problem, Ricceri's variational principle}
\dedicatory{}
\thanks{}

\begin{abstract}
This paper is concerned with a nonlinear Steklov boundary-value problem
involving weighted $p\left( .\right) $-Laplacian. Using the Ricceri's
variational principle, we obtain the existence of at least three weak
solutions in double weighted variable exponent Sobolev space.
\end{abstract}

\maketitle

\section{Introduction}

The purpose of the present paper is to study the following Steklov problem
of the type%
\begin{equation}
\left\{ 
\begin{array}{cc}
\func{div}\left( a(x)\left\vert \nabla u\right\vert ^{p(x)-2}\nabla u\right)
=b(x)\left\vert u\right\vert ^{p(x)-2}u, & x\in \Omega \\ 
a(x)\left\vert \nabla u\right\vert ^{p(x)-2}\frac{\partial u}{\partial
\upsilon }=\lambda f\left( x,u\right) , & x\in \partial \Omega ,%
\end{array}%
\right.  \label{P}
\end{equation}%
where $\Omega \subset 
\mathbb{R}
^{N}$ $(N\geq 2)$ is a bounded smooth domain, $\frac{\partial u}{\partial
\upsilon }$ is the outer unit normal derivative on $\partial \Omega $, $%
\lambda >0$ is a real number, $p$ is a continuous function on $\overline{%
\Omega }$, i.e. $p\in C\left( \overline{\Omega }\right) $ with $\underset{%
y\in \overline{\Omega }}{\inf }p(y)>N$, the function $f:\partial \Omega
\times 
\mathbb{R}
\rightarrow 
\mathbb{R}
$ will be specified later, $a(x)$ and $b(x)$ are weight functions.

Nonlinear and variational problems involving $p(x)$-Laplacian operator has
attracted great attention in recent years. Because several physical
problems, such as electrorheological fluids, elastic mechanics, stationary
thermo-rheological viscous flows of non-Newtonian fluids, exponential growth
and image processing, can be modeled by such kind of equations, see \cite{Ha}%
, \cite{Ru}, \cite{Zh}. In recent years $p(x)$-Laplacian equations have
attracted great attention, see \cite{ay}, \cite{den}, \cite{fan}, \cite{fa}, 
\cite{mi}, \cite{un}. Moreover, Steklov problems involving $p(x)$-Laplacian
operator have been studied by many authors, see \cite{af}, \cite{ala}, \cite%
{al}, \cite{ba}, \cite{ali}, \cite{de}, \cite{hi}. In 2008, Deng \cite{de}
considered the eigenvalue of $p(x)$-Laplacian Steklov problem and proved the
existence of infinitely many eigenvalue sequences. Moreover, Mostofa et al. 
\cite{al}\ obtained the existence and multiplicity of solutions of the
nonlinear Steklov boundary-value problem using Ricceri's result in weighted
variable exponent Sobolev spaces.

In this paper, we define double weighted variable exponent Sobolev spaces
and give some basic properties of these spaces. We also obtain more general
results than above mentioned references, especially \cite{al}, under some
suitable conditions. Finally, we show that the problem (\ref{P}) has at
least three weak solutions due to the approach of Ricceri \cite{ri}.

\section{Notation and Preliminaries}

In this section, we give some definitions and basic informations about
weighted variable Lebesgue and Sobolev spaces to find out the solution of
the problem (\ref{P}). A normed space $\left( X,\left\Vert .\right\Vert
_{X}\right) $ is called a Banach function space (shortly BF-space), if
Banach space $\left( X,\left\Vert .\right\Vert _{X}\right) $ is continuously
embedded into $L_{loc}^{1}\left( \Omega \right) ,$ briefly $X\hookrightarrow
L_{loc}^{1}\left( \Omega \right) ,$ i.e. for any compact subset $K\subset
\Omega $ there is some constant $c_{K}>0$ such that $\left\Vert f\chi
_{K}\right\Vert _{L^{1}\left( \Omega \right) }\leq c_{K}\left\Vert
f\right\Vert _{X}$ for every $f\in X.$ Moreover, a normed space $X$ is
compactly embedded in a normed space $Y,$ briefly $X\hookrightarrow
\hookrightarrow Y,$ if $X\hookrightarrow Y$ and the identity operator $%
I:X\longrightarrow Y$ is compact, equivalently, $I$ maps every bounded
sequence $\left( x_{i}\right) _{i\in 
\mathbb{N}
}$ into a sequence $\left( I\left( x_{i}\right) \right) _{i\in 
\mathbb{N}
}$ that contains a subsequence converging in $Y.$ Suppose that $X$ and $Y$
are two Banach spaces and $X$ is reflexive. Then $I:X\longrightarrow Y$ is a
compact operator if and only if $I$ maps weakly convergent sequences in $X$
onto convergent sequences in $Y.$ More details can be found in \cite{ada}.
Suppose that $\Omega $ is a bounded open domain of $%
\mathbb{R}
^{N}$ with a smooth boundary $\partial \Omega $ and $p\in C_{+}\left( 
\overline{\Omega }\right) $, where 
\begin{equation*}
C_{+}\left( \overline{\Omega }\right) =\left\{ p\in C\left( \overline{\Omega 
}\right) :\inf_{x\in \overline{\Omega }}p(x)>1\right\} .
\end{equation*}%
For any $p\in C_{+}\left( \overline{\Omega }\right) $, we denote%
\begin{equation*}
1<p^{-}=\inf_{x\in \Omega }p(x)\leq p^{+}=\sup_{x\in \Omega }p(x)<\infty .
\end{equation*}%
Let $p\in C_{+}\left( \overline{\Omega }\right) $. The variable exponent
Lebesgue space $L^{p(.)}(\Omega )$ consists of all measurable functions $u$
such that $\varrho _{p(.)}(u)<\infty $ , equipped with the Luxemburg norm%
\begin{equation*}
\left\Vert u\right\Vert _{p(.)}=\inf \left\{ \lambda >0:\varrho
_{p(.)}\left( \frac{u}{\lambda }\right) \leq 1\right\} \text{,}
\end{equation*}%
where 
\begin{equation*}
\varrho _{p(.)}(u)=\int\limits_{\Omega }\left\vert u(x)\right\vert ^{p(x)}dx%
\text{.}
\end{equation*}%
The space $L^{p(.)}\left( \Omega \right) $ is a Banach space with respect to 
$\left\Vert .\right\Vert _{p(.)}$. If $p\left( .\right) =p$ is a constant
function, then the norm $\left\Vert .\right\Vert _{p(.)}$ coincides with the
usual Lebesgue norm $\left\Vert .\right\Vert _{p}$, see \cite{ko}. A
measurable and locally integrable function $a:\Omega \longrightarrow \left(
0,\infty \right) $ is called a weight function. Define the weighted variable
exponent Lebesgue space by%
\begin{equation*}
L_{a}^{p(.)}(\Omega )=\left\{ u\left\vert u:\Omega \longrightarrow 
\mathbb{R}
\text{ measurable and }\int\limits_{\Omega }\left\vert u(x)\right\vert
^{p(x)}a(x)dx<+\infty \right. \right\}
\end{equation*}%
with the Luxemburg norm 
\begin{equation*}
\left\Vert u\right\Vert _{p(.),a}=\inf \left\{ \tau >0:\varrho
_{p(.),a}\left( \frac{u}{\tau }\right) \leq 1\right\} ,
\end{equation*}%
where 
\begin{equation*}
\varrho _{p(.),a}(u)=\int\limits_{\Omega }\left\vert u(x)\right\vert
^{p(x)}a(x)dx\text{.}
\end{equation*}%
The space $L_{a}^{p(.)}(\Omega )$ is a Banach space with respect to $%
\left\Vert .\right\Vert _{p(.),a}.$ Moreover, $u\in L_{a}^{p(.)}(\Omega )$
if and only if $\left\Vert u\right\Vert _{p(.),a}=\left\Vert ua^{\frac{1}{%
p(.)}}\right\Vert _{p(.)}<\infty $. It is known that the relationships
between $\varrho _{p(.),a}$ and $\left\Vert .\right\Vert _{p(.),a}$ as%
\begin{equation*}
\min \left\{ \varrho _{p(.),a}(u)^{\frac{1}{p^{-}}},\varrho _{p(.),a}(u)^{%
\frac{1}{p^{+}}}\right\} \leq \left\Vert u\right\Vert _{p(.),a}\leq \max
\left\{ \varrho _{p(.),a}(u)^{\frac{1}{p^{-}}},\varrho _{p(.),a}(u)^{\frac{1%
}{p^{+}}}\right\}
\end{equation*}%
and%
\begin{equation*}
\min \left\{ \left\Vert u\right\Vert _{p(.),a}^{p^{-}},\left\Vert
u\right\Vert _{p(.),a}^{p^{+}}\right\} \leq \varrho _{p(.),a}(u)\leq \max
\left\{ \left\Vert u\right\Vert _{p(.),a}^{p^{-}},\left\Vert u\right\Vert
_{p(.),a}^{p^{+}}\right\}
\end{equation*}%
are satisfied. Also, if $0<C_{1}\leq a(x)$ for all $x\in \Omega $, then we
have $L_{a}^{p(.)}(\Omega )\hookrightarrow L^{p(.)}(\Omega ),$ since one
easily sees that 
\begin{equation*}
C_{1}\int\limits_{\Omega }\left\vert u(x)\right\vert ^{p(x)}dx\leq
\int\limits_{\Omega }\left\vert u(x)\right\vert ^{p(x)}a(x)dx
\end{equation*}%
and $C_{1}\left\Vert u\right\Vert _{p(.)}\leq \left\Vert u\right\Vert
_{p(.),a}$. Moreover, the dual space of $L_{a}^{p(.)}(\Omega )$ is $%
L_{a^{\ast }}^{q(.)}(\Omega )$, where $\frac{1}{p(.)}+\frac{1}{q(.)}=1$ and $%
a^{\ast }=a^{1-q\left( .\right) }=a^{-\frac{1}{p(.)-1}}.$ Let $a:\Omega
\longrightarrow \left( 0,\infty \right) $. In addition, the space $%
L_{a}^{p(.)}(\partial \Omega )$ can be defined by%
\begin{equation*}
L_{a}^{p(.)}(\partial \Omega )=\left\{ u\left\vert u:\partial \Omega
\longrightarrow 
\mathbb{R}
\text{ measurable and }\int\limits_{\partial \Omega }\left\vert
u(x)\right\vert ^{p(x)}a(x)d\sigma <+\infty \right. \right\}
\end{equation*}%
equipped with the Luxemburg norm, where $d\sigma $ is the measure on the
boundary. Then the space $L_{a}^{p(.)}(\partial \Omega )$ is a Banach space
with respect to $\left\Vert .\right\Vert _{p(.),a}$. If $a\in L^{\infty
}\left( \Omega \right) $, then we get $L_{a}^{p(.)}=L^{p(.)}.$

\begin{theorem}
(see \cite{ayd}) If $a^{-\frac{1}{p(.)-1}}\in L_{loc}^{1}\left( \Omega
\right) $, then $L_{a}^{p(.)}(\Omega )\hookrightarrow L_{loc}^{1}\left(
\Omega \right) \hookrightarrow D^{\prime }(\Omega )$, that is, every
function in $L_{a}^{p(.)}(\Omega )$ has distributional (weak) derivative,
where $D^{\prime }(\Omega )$ is distribution space.
\end{theorem}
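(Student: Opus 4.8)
The plan is to prove the two embeddings separately. The second one, $L_{loc}^{1}(\Omega)\hookrightarrow D^{\prime}(\Omega)$, is the classical identification of a locally integrable function $f$ with the distribution $\varphi\mapsto\int_{\Omega}f\varphi\,dx$ (injective by the du Bois-Reymond lemma), and it carries no difficulty. The substance lies in establishing $L_{a}^{p(.)}(\Omega)\hookrightarrow L_{loc}^{1}(\Omega)$, which, by the definition of a BF-space recalled above, amounts to producing for each compact $K\subset\Omega$ a constant $c_{K}>0$ with $\int_{K}\left\vert u(x)\right\vert \,dx\leq c_{K}\left\Vert u\right\Vert _{p(.),a}$ for every $u\in L_{a}^{p(.)}(\Omega)$.

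The key tool is a weighted H\"{o}lder inequality. Given $u\in L_{a}^{p(.)}(\Omega)$ and any measurable $v$, I would write $\int_{\Omega}\left\vert uv\right\vert \,dx=\int_{\Omega}\bigl(\left\vert u\right\vert a^{1/p(.)}\bigr)\bigl(\left\vert v\right\vert a^{-1/p(.)}\bigr)\,dx$ and apply the ordinary H\"{o}lder inequality in $L^{p(.)}(\Omega)$. Since $\left\Vert ua^{1/p(.)}\right\Vert _{p(.)}=\left\Vert u\right\Vert _{p(.),a}$ by the characterisation stated earlier, this yields
\begin{equation*}
\int_{\Omega}\left\vert uv\right\vert \,dx\leq C\left\Vert u\right\Vert _{p(.),a}\left\Vert va^{-1/p(.)}\right\Vert _{q(.)},
\end{equation*}
where $\frac{1}{p(.)}+\frac{1}{q(.)}=1$ and $C=\frac{1}{p^{-}}+\frac{1}{q^{-}}$ is the constant from the generalised H\"{o}lder inequality. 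The decisive bookkeeping is that, because $\frac{q(.)}{p(.)}=q(.)-1$, one has $a^{-q(.)/p(.)}=a^{1-q(.)}=a^{-1/(p(.)-1)}=a^{\ast}$, so that $\varrho_{q(.)}\bigl(va^{-1/p(.)}\bigr)=\int_{\Omega}\left\vert v\right\vert ^{q(.)}a^{\ast}\,dx=\varrho_{q(.),a^{\ast}}(v)$. In other words $\left\Vert va^{-1/p(.)}\right\Vert _{q(.)}=\left\Vert v\right\Vert _{q(.),a^{\ast}}$, and the estimate becomes the weighted H\"{o}lder inequality $\int_{\Omega}\left\vert uv\right\vert \,dx\leq C\left\Vert u\right\Vert _{p(.),a}\left\Vert v\right\Vert _{q(.),a^{\ast}}$.

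With this in hand, for a fixed compact $K\subset\Omega$ I would choose $v=\chi_{K}$. Then $\int_{K}\left\vert u\right\vert \,dx\leq C\left\Vert u\right\Vert _{p(.),a}\left\Vert \chi_{K}\right\Vert _{q(.),a^{\ast}}$, so it remains to verify $\left\Vert \chi_{K}\right\Vert _{q(.),a^{\ast}}<\infty$. This is exactly where the hypothesis enters: $\varrho_{q(.),a^{\ast}}(\chi_{K})=\int_{K}a^{\ast}(x)\,dx=\int_{K}a^{-1/(p(.)-1)}(x)\,dx<\infty$, because $a^{-1/(p(.)-1)}\in L_{loc}^{1}(\Omega)$ and $K$ is compact; by the modular--norm relations this forces $\left\Vert \chi_{K}\right\Vert _{q(.),a^{\ast}}<\infty$. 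Setting $c_{K}=C\left\Vert \chi_{K}\right\Vert _{q(.),a^{\ast}}$ completes the embedding, and combining it with $L_{loc}^{1}(\Omega)\hookrightarrow D^{\prime}(\Omega)$ shows that every $u\in L_{a}^{p(.)}(\Omega)$ admits a distributional derivative.

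The only genuinely delicate point is the exponent/weight identity $a^{-q(.)/p(.)}=a^{-1/(p(.)-1)}$, which must be checked pointwise from $\frac{1}{p(x)}+\frac{1}{q(x)}=1$; everything else is a routine invocation of the generalised H\"{o}lder inequality and the definition of a BF-space. I would present that identity carefully, since it is precisely what aligns the dual weight $a^{\ast}$ appearing in H\"{o}lder's inequality with the integrability hypothesis imposed on $a$.
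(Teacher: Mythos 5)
Your proposal is correct: writing $|u\chi_K| = \bigl(|u|a^{1/p(.)}\bigr)\bigl(\chi_K a^{-1/p(.)}\bigr)$, applying the generalised H\"{o}lder inequality, using $\left\Vert ua^{1/p(.)}\right\Vert _{p(.)}=\left\Vert u\right\Vert _{p(.),a}$, and reducing everything to the pointwise identity $a^{-q(.)/p(.)}=a^{1-q(.)}=a^{-1/(p(.)-1)}$ together with ``finite modular implies finite norm'' is exactly the standard argument, and each of these steps is valid here (in particular $q^{-}>0$, so the modular bound $\varrho_{q(.),a^{\ast }}(\chi_K)=\int_K a^{-1/(p(.)-1)}dx<\infty $ does force $\left\Vert \chi_K\right\Vert _{q(.),a^{\ast }}<\infty $). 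The paper itself contains no proof of this theorem --- it is imported from \cite{ayd} --- and your argument is the same weighted H\"{o}lder route used there, so there is no divergence of approach and no gap to report.
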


\begin{remark}
If $a^{-\frac{1}{p(.)-1}}\notin L_{loc}^{1}\left( \Omega \right) $, then the
embedding $L_{a}^{p(.)}(\Omega )\hookrightarrow L_{loc}^{1}\left( \Omega
\right) $ need not hold.
\end{remark}

\begin{definition}
Let $a^{-\frac{1}{p(.)-1}}\in L_{loc}^{1}\left( \Omega \right) $. We set the
weighted variable exponent Sobolev space $W_{a}^{k,p(.)}\left( \Omega
\right) $ by%
\begin{equation*}
W_{a}^{k,p(.)}\left( \Omega \right) =\left\{ u\in L_{a}^{p(.)}(\Omega
):D^{\alpha }u\in L_{a}^{p(.)}(\Omega ),0\leq \left\vert \alpha \right\vert
\leq k\right\}
\end{equation*}%
equipped with the norm 
\begin{equation*}
\left\Vert u\right\Vert _{k,p(.),a}=\sum\limits_{0\leq \left\vert \alpha
\right\vert \leq k}\left\Vert D^{\alpha }u\right\Vert _{p(.),a}
\end{equation*}%
where $\alpha \in 
\mathbb{N}
_{0}^{N}$ is a multi-index, $\left\vert \alpha \right\vert =\alpha
_{1}+\alpha _{2}+...+\alpha _{N}$ and $D^{\alpha }=\frac{\partial
^{\left\vert \alpha \right\vert }}{\partial _{x_{1}}^{\alpha
_{1}}...\partial _{x_{N}}^{\alpha _{N}}}$. It is known that $%
W_{a}^{k,p(.)}\left( \Omega \right) $ is a reflexive Banach space. In
particular, the space $W_{a}^{1,p(.)}\left( \Omega \right) $ is defined by 
\begin{equation*}
W_{a}^{1,p(.)}\left( \Omega \right) =\left\{ u\in L_{a}^{p(.)}(\Omega
):\left\vert \nabla u\right\vert \in L_{a}^{p(.)}(\Omega )\right\} .
\end{equation*}%
The function $\varrho _{1,p(.),a}:W_{a}^{1,p(.)}\left( \Omega \right)
\longrightarrow \left[ 0,\infty \right) $ is shown as $\varrho
_{1,p(.),a}(u)=\varrho _{p(.),a}(u)+\varrho _{p(.),a}\left( \nabla u\right) $%
. Also, the norm $\left\Vert u\right\Vert _{1,p(.),a}=\left\Vert
u\right\Vert _{p(.),a}+\left\Vert \nabla u\right\Vert _{p(.),a}$ makes the
space $W_{a}^{1,p(.)}\left( \Omega \right) $ a Banach space.
\end{definition}

Let $a^{-\frac{1}{p(.)-1}}\in L_{loc}^{1}\left( \Omega \right) $ and $b^{-%
\frac{1}{p(.)-1}}\in L_{loc}^{1}\left( \Omega \right) $. The double weighted
variable exponent Sobolev space $W_{a,b}^{1,p(.)}\left( \Omega \right) $ is
defined by%
\begin{equation*}
W_{a,b}^{1,p(.)}\left( \Omega \right) =\left\{ u\in L_{b}^{p(.)}(\Omega
):\left\vert \nabla u\right\vert \in L_{a}^{p(.)}(\Omega )\right\}
\end{equation*}%
equipped with the norm%
\begin{equation*}
\left\Vert u\right\Vert _{1,p(.),a,b}=\left\Vert \nabla u\right\Vert
_{p(.),a}+\left\Vert u\right\Vert _{p(.),b}.
\end{equation*}%
Since $a^{-\frac{1}{p(.)-1}}\in L_{loc}^{1}\left( \Omega \right) $ and $b^{-%
\frac{1}{p(.)-1}}\in L_{loc}^{1}\left( \Omega \right) ,$ then it can be seen
that $L_{a}^{p(.)}(\Omega )\hookrightarrow L_{loc}^{1}\left( \Omega \right) $
and $L_{b}^{p(.)}(\Omega )\hookrightarrow L_{loc}^{1}\left( \Omega \right) $%
. Therefore, the double weighted variable exponent Sobolev space $%
W_{a,b}^{1,p(.)}\left( \Omega \right) $ is well-defined. The dual space of $%
W_{a,b}^{1,p(.)}\left( \Omega \right) $ is $W_{a^{\ast },b^{\ast
}}^{-1,q(.)}\left( \Omega \right) $, where $\frac{1}{p(.)}+\frac{1}{q(.)}=1$
and $a^{\ast }=a^{-\frac{1}{p(.)-1}},$ $b^{\ast }=b^{-\frac{1}{p(.)-1}}$.
Moreover, the space $W_{a,b}^{1,p(.)}\left( \Omega \right) $ is a separable
and reflexive Banach space.

\begin{proposition}
\label{pro 4}(see \cite{li}) Let $I(u)=\int\limits_{\Omega }\left(
a(x)\left\vert \nabla u\right\vert ^{p(x)}+b(x)\left\vert u\right\vert
^{p(x)}\right) dx$. For all $u\in W_{a,b}^{1,p(.)}\left( \Omega \right) $,

\begin{enumerate}
\item[\textit{(i)}] if $\left\Vert u\right\Vert _{1,p(.),a,b}\geq 1,$ then
the inequality $\left\Vert u\right\Vert _{1,p(.),a,b}^{p^{-}}\leq I(u)\leq
\left\Vert u\right\Vert _{1,p(.),a,b}^{p^{+}}$ is satisfied.

\item[\textit{(ii)}] if $\left\Vert u\right\Vert _{1,p(.),a,b}\leq 1,$ then
the inequality $\left\Vert u\right\Vert _{1,p(.),a,b}^{p^{+}}\leq I(u)\leq
\left\Vert u\right\Vert _{1,p(.),a,b}^{p^{-}}$ is satisfied.
\end{enumerate}
\end{proposition}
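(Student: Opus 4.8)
The plan is to observe that the integral in the statement is exactly the combined modular of the space, $I(u)=\varrho _{1,p(.),a,b}(u):=\varrho _{p(.),a}(\nabla u)+\varrho _{p(.),b}(u)$, and then to run the scaling argument that produced the single-weight modular--norm inequalities recalled in Section 2. The engine of the whole proof is the normalization identity
\begin{equation*}
\varrho _{1,p(.),a,b}\left( \frac{u}{\left\Vert u\right\Vert _{1,p(.),a,b}}\right) =1\qquad (u\neq 0),
\end{equation*}
so I would establish this first and treat the two cases (i) and (ii) as immediate consequences.

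To prove the identity, fix $u\neq 0$ and consider the function $\tau \mapsto \varrho _{1,p(.),a,b}(u/\tau )=\int_{\Omega }\left( a|\nabla u|^{p(x)}+b|u|^{p(x)}\right) \tau ^{-p(x)}dx$ on $(0,\infty )$. Because $\tau ^{-p(x)}$ is continuous in $\tau $ and, on any compact subinterval, is dominated by an integrable majorant (here $1<p^{-}\leq p(x)\leq p^{+}<\infty $ is what guarantees the domination), dominated convergence shows this function is continuous; it is strictly decreasing since $\tau \mapsto \tau ^{-p(x)}$ is. Using $\tau ^{-p(x)}\geq \tau ^{-p^{-}}$ for small $\tau $ one gets $\varrho _{1,p(.),a,b}(u/\tau )\to \infty $ as $\tau \to 0^{+}$ (the integrand density is positive on a set of positive measure because $u\neq 0$), while $\tau ^{-p(x)}\leq \tau ^{-p^{-}}$ for large $\tau $ gives the limit $0$ as $\tau \to \infty $. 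Hence there is a unique $\tau $ at which the modular equals $1$, and by the definition of the Luxemburg norm this value is exactly $\left\Vert u\right\Vert _{1,p(.),a,b}$, which proves the identity.

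With $\lambda :=\left\Vert u\right\Vert _{1,p(.),a,b}$ and the identity $\int_{\Omega }\left( a|\nabla u|^{p(x)}+b|u|^{p(x)}\right) \lambda ^{-p(x)}dx=1$ in hand, both parts follow by bounding $\lambda ^{-p(x)}$ by powers of $\lambda $ with the constant exponents $p^{-},p^{+}$. For (i), when $\lambda \geq 1$ one has $\lambda ^{-p^{+}}\leq \lambda ^{-p(x)}\leq \lambda ^{-p^{-}}$, so pulling these constants out of the integral gives $\lambda ^{-p^{+}}I(u)\leq 1\leq \lambda ^{-p^{-}}I(u)$, i.e. $\lambda ^{p^{-}}\leq I(u)\leq \lambda ^{p^{+}}$. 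For (ii), when $\lambda \leq 1$ the inequalities on $\lambda ^{-p(x)}$ reverse to $\lambda ^{-p^{-}}\leq \lambda ^{-p(x)}\leq \lambda ^{-p^{+}}$, and the same integration yields $\lambda ^{p^{+}}\leq I(u)\leq \lambda ^{p^{-}}$; the degenerate case $u=0$ is trivial. I expect the only real obstacle to be the normalization identity --- specifically verifying continuity of the modular and the two limits --- since once $\varrho _{1,p(.),a,b}(u/\lambda )=1$ is available the exponent estimates are purely mechanical.
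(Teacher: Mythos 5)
The paper never proves this proposition: it is imported by citation from \cite{li}, so there is no in-paper argument to compare yours against. Judged on its own terms, your argument is the standard one, and everything after the normalization identity is mechanically correct. The problem is the identity itself. You assert that the unique $\tau $ with $\varrho _{1,p(.),a,b}\left( u/\tau \right) =1$ equals $\left\Vert u\right\Vert _{1,p(.),a,b}$ ``by the definition of the Luxemburg norm.'' But in this paper $\left\Vert u\right\Vert _{1,p(.),a,b}$ is \emph{not} defined as the Luxemburg norm of the combined modular $\varrho _{1,p(.),a,b}(u)=\varrho _{p(.),a}\left( \nabla u\right) +\varrho _{p(.),b}(u)$; it is defined as the sum of two separately minimized Luxemburg norms, $\left\Vert \nabla u\right\Vert _{p(.),a}+\left\Vert u\right\Vert _{p(.),b}$. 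These two norms are equivalent but not equal, and your identity fails for the sum norm: normalizing $u$ by the sum of the two infima does not drive the combined modular to the value $1$.

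The failure is not cosmetic. Take $p(x)\equiv p>1$ constant, so $p^{-}=p^{+}=p$ and $\varrho _{p,a}\left( \nabla u\right) =\left\Vert \nabla u\right\Vert _{p,a}^{p}$, $\varrho _{p,b}(u)=\left\Vert u\right\Vert _{p,b}^{p}$. For any $u$ with $A:=\left\Vert \nabla u\right\Vert _{p,a}>0$, $B:=\left\Vert u\right\Vert _{p,b}>0$ and $A+B\geq 1$, part (i) with the sum norm asserts $\left( A+B\right) ^{p}\leq A^{p}+B^{p}$, which is false whenever $A,B>0$ and $p>1$. So not only does your normalization identity break down for the paper's norm; the proposition as literally stated with the paper's definition is false, a defect you have inadvertently exposed rather than created. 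Your proof is exactly the right (and standard) proof once $\left\Vert u\right\Vert _{1,p(.),a,b}$ is taken to be the Luxemburg norm attached to the combined modular, namely $\inf \left\{ \tau >0:\varrho _{1,p(.),a,b}\left( u/\tau \right) \leq 1\right\} $, which is the norm for which sources such as \cite{li} state this result. To make your write-up valid in the present setting you must either adopt that equivalent norm explicitly (and note the equivalence with the sum norm), or keep the sum norm and accept multiplicative constants in both inequalities; as written, the step identifying your normalizing $\tau $ with $\left\Vert u\right\Vert _{1,p(.),a,b}$ is a genuine gap.
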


The following a compact embedding theorem of $W_{a,b}^{1,p(.)}\left( \Omega
\right) $ into $C\left( \overline{\Omega }\right) $ plays an important role
in this paper. For the proof, we use the method in \cite[Theorem 2.11]{ki}.

\begin{theorem}
\label{teo5}Let $a^{-\alpha \left( .\right) }\in L^{1}\left( \Omega \right) $
with $\alpha \left( x\right) \in \left( \frac{N}{p(x)},\infty \right) \cap %
\left[ \frac{1}{p(x)-1},\infty \right) $. If we define the variable exponent 
$p_{\ast }(x)=\frac{\alpha \left( x\right) p(x)}{\alpha \left( x\right) +1}$
with $N<p_{\ast }^{-}$, then we have the compact embedding $%
W_{a,b}^{1,p(.)}\left( \Omega \right) \hookrightarrow \hookrightarrow
C\left( \overline{\Omega }\right) .$
\end{theorem}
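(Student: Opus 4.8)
The plan is to factor the embedding through an ordinary (unweighted) variable exponent Sobolev space and then invoke the classical Morrey--Rellich embedding. Concretely, I would first prove the continuous embedding $W_{a,b}^{1,p(.)}(\Omega)\hookrightarrow W^{1,p_{\ast}(.)}(\Omega)$, and then use that, since $p_{\ast}^{-}>N$, one has the compact embedding $W^{1,p_{\ast}(.)}(\Omega)\hookrightarrow\hookrightarrow C(\overline{\Omega})$ (the variable exponent Morrey--Sobolev inequality followed by Arzel\`a--Ascoli). A continuous embedding composed with a compact one is compact, which yields the assertion. The entire analytic content is thus concentrated in the first (continuous) embedding, i.e. in converting the weighted information $a\left\vert \nabla u\right\vert ^{p}\in L^{1}$ into the unweighted estimate $\left\vert \nabla u\right\vert \in L^{p_{\ast}(.)}(\Omega)$.

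For the gradient term I would use the generalized H\"older inequality for variable exponent spaces. Writing $\theta(x)=\tfrac{\alpha(x)}{\alpha(x)+1}$, so that $p_{\ast}(x)=\theta(x)p(x)$, I factor
\begin{equation*}
\left\vert \nabla u\right\vert ^{p_{\ast}(x)}=\left(a(x)\left\vert \nabla u\right\vert ^{p(x)}\right)^{\theta(x)}\,a(x)^{-\theta(x)}
\end{equation*}
and apply H\"older with the conjugate variable exponents $s(x)=\tfrac{1}{\theta(x)}=\tfrac{\alpha(x)+1}{\alpha(x)}$ and $s^{\prime}(x)=\alpha(x)+1$. The first factor lies in $L^{s(.)}(\Omega)$ because $\big((a\left\vert \nabla u\right\vert ^{p})^{\theta}\big)^{s}=a\left\vert \nabla u\right\vert ^{p}$ is integrable (its integral is exactly the modular $\varrho_{p(.),a}(\nabla u)$), while the second factor lies in $L^{s^{\prime}(.)}(\Omega)$ since $\big(a^{-\theta}\big)^{s^{\prime}}=a^{-\theta s^{\prime}}=a^{-\alpha}\in L^{1}(\Omega)$ by hypothesis, using $\theta s^{\prime}=\alpha$. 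Translating these modular bounds into norm bounds via the modular--norm relations recorded before Proposition \ref{pro 4} (and Proposition \ref{pro 4} itself), I obtain a constant $C$, depending only on $\left\Vert a^{-\alpha}\right\Vert _{L^{1}(\Omega)}$, $p^{\pm}$ and $p_{\ast}^{\pm}$, with $\big\Vert \left\vert \nabla u\right\vert \big\Vert _{p_{\ast}(.)}\leq C\,\big\Vert \left\vert \nabla u\right\vert \big\Vert _{p(.),a}$.

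For the function term one has two options. If a symmetric condition $b^{-\alpha(.)}\in L^{1}(\Omega)$ is available, the identical computation with $b$ in place of $a$ gives $\left\Vert u\right\Vert _{p_{\ast}(.)}\leq C\left\Vert u\right\Vert _{p(.),b}$, completing the embedding into $W^{1,p_{\ast}(.)}(\Omega)$. Using only the stated hypothesis on $a$, I would instead anchor $u$ directly: Morrey's inequality applied to $\left\vert \nabla u\right\vert \in L^{p_{\ast}(.)}(\Omega)$ with $p_{\ast}^{-}>N$ controls the oscillation,
\begin{equation*}
\left\vert u(x)-u(y)\right\vert \leq C\,\big\Vert \left\vert \nabla u\right\vert \big\Vert _{p_{\ast}(.)}\,\left\vert x-y\right\vert ^{\gamma},\qquad \gamma=1-\tfrac{N}{p_{\ast}^{-}},
\end{equation*}
while the local embedding $L_{b}^{p(.)}(\Omega)\hookrightarrow L^{1}_{loc}(\Omega)$ (the theorem quoted from \cite{ayd}) yields $\int_{K}\left\vert u\right\vert \,dx\leq c_{K}\left\Vert u\right\Vert _{p(.),b}$ on a fixed compact $K\subset\Omega$ of positive measure, bounding the mean of $u$ over $K$. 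Combining the oscillation bound with this mean bound gives a uniform estimate $\left\Vert u\right\Vert _{C^{0,\gamma}(\overline{\Omega})}\leq C\left\Vert u\right\Vert _{1,p(.),a,b}$.

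Finally, the compactness is the soft step: the estimate just obtained is in fact a continuous embedding $W_{a,b}^{1,p(.)}(\Omega)\hookrightarrow C^{0,\gamma}(\overline{\Omega})$, and $C^{0,\gamma}(\overline{\Omega})\hookrightarrow\hookrightarrow C(\overline{\Omega})$ is compact by Arzel\`a--Ascoli, since a bounded family in $C^{0,\gamma}(\overline{\Omega})$ is uniformly bounded and equicontinuous; composition gives the claimed compact embedding. The main obstacle I expect is the bookkeeping in the H\"older step -- matching the variable exponents so that exactly the quantity $a^{-\alpha}$ appears -- together with the more delicate point of controlling $\sup_{\overline{\Omega}}\left\vert u\right\vert$, not merely its oscillation, using only the weight $a$. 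The hypotheses $\alpha(x)>\tfrac{N}{p(x)}$ and $\alpha(x)\geq \tfrac{1}{p(x)-1}$ are exactly what make $p_{\ast}^{-}>N$ feasible and keep $a^{-\frac{1}{p(.)-1}}$ locally integrable, so that all the spaces involved are well defined.
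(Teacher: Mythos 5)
Your proposal is correct in substance, and its central analytic step is exactly the paper's: the paper proves $W_{a,b}^{1,p(.)}\left( \Omega \right) \hookrightarrow W^{1,p_{\ast }(.)}\left( \Omega \right) $ by the same H\"{o}lder factorization $\left\vert \nabla u\right\vert ^{p_{\ast }(x)}=\left( a^{\frac{\alpha (x)}{\alpha (x)+1}}\left\vert \nabla u\right\vert ^{\frac{\alpha (x)p(x)}{\alpha (x)+1}}\right) a^{-\frac{\alpha (x)}{\alpha (x)+1}}$ with conjugate exponents $\frac{\alpha (.)+1}{\alpha (.)}$ and $\alpha (.)+1$, bounds the second factor by $a^{-\alpha (.)}\in L^{1}\left( \Omega \right) $, passes from modular to norm estimates via Proposition \ref{pro 4}, and concludes with $W^{1,p_{\ast }(.)}\left( \Omega \right) \hookrightarrow W^{1,p_{\ast }^{-}}\left( \Omega \right) \hookrightarrow \hookrightarrow C\left( \overline{\Omega }\right) $ since $p_{\ast }^{-}>N$. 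Where you genuinely diverge is the zeroth-order term: the paper disposes of it with the one-line claim that $u\in L^{p_{\ast }(.)}(\Omega )$ ``due to $p_{\ast }(.)<p(.)$'' and then invokes the Banach closed graph theorem; but $u$ is only known to lie in $L_{b}^{p(.)}(\Omega )$, and with no lower bound on $b$ this does not give $u\in L^{p(.)}(\Omega )$, so the set inclusion underlying the closed-graph step is the weakest point of the published proof. Your second option repairs precisely this: the Morrey oscillation bound from $\nabla u\in L^{p_{\ast }(.)}(\Omega )\subset L^{p_{\ast }^{-}}(\Omega )$, anchored by the mean of $u$ over a fixed compact set via $L_{b}^{p(.)}(\Omega )\hookrightarrow L_{loc}^{1}\left( \Omega \right) $ (the embedding quoted from \cite{ayd}), yields $\left\Vert u\right\Vert _{C^{0,\gamma }(\overline{\Omega })}\leq C\left\Vert u\right\Vert _{1,p(.),a,b}$, and Arzel\`{a}--Ascoli then gives compactness directly, with H\"{o}lder regularity as a bonus. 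Your first option (assuming $b^{-\alpha (.)}\in L^{1}\left( \Omega \right) $) is not available under the stated hypotheses and should be discarded. Two small caveats: the linear bound $\left\Vert \left\vert \nabla u\right\vert \right\Vert _{p_{\ast }(.)}\leq C\left\Vert \left\vert \nabla u\right\vert \right\Vert _{p(.),a}$ does not follow verbatim from the (inhomogeneous) modular inequalities --- either argue on the unit ball and use linearity of the inclusion, or accept power-type bounds as the paper does in (\ref{3}); and the Morrey estimate up to the boundary requires the smoothness (quasiconvexity) of $\Omega $ together with $u\in W_{loc}^{1,1}\left( \Omega \right) $, which the local embedding supplies.
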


\begin{proof}
First we will show that the continuous embedding $W_{a,b}^{1,p(.)}\left(
\Omega \right) \hookrightarrow W^{1,p_{\ast }(.)}\left( \Omega \right) $ is
valid. Let $u\in W_{a,b}^{1,p(.)}\left( \Omega \right) .$ Then we write that 
$u\in L_{b}^{p(.)}(\Omega )$ and $\nabla u\in L_{a}^{p(.)}(\Omega )$. Using H%
\"{o}lder's inequality with $q(x)=\frac{p(x)}{p_{\ast }(x)}=\frac{\alpha
\left( x\right) +1}{\alpha \left( x\right) }$ and $q^{\prime }(x)=\alpha
\left( x\right) +1$, we have%
\begin{eqnarray*}
\int\limits_{\Omega }\left\vert \nabla u\left( x\right) \right\vert
^{p_{\ast }\left( x\right) }dx &=&\int\limits_{\Omega }\left\vert \nabla
u\left( x\right) \right\vert ^{\frac{\alpha \left( x\right) p(x)}{\alpha
\left( x\right) +1}}dx \\
&=&\int\limits_{\Omega }\left\vert \nabla u\left( x\right) \right\vert ^{%
\frac{\alpha \left( x\right) p(x)}{\alpha \left( x\right) +1}}a^{\frac{%
\alpha \left( x\right) }{\alpha \left( x\right) +1}}(x)a^{-\frac{\alpha
\left( x\right) }{\alpha \left( x\right) +1}}(x)dx \\
&\leq &2\left\Vert a^{\frac{\alpha \left( .\right) }{\alpha \left( .\right)
+1}}\left\vert \nabla u\right\vert ^{\frac{\alpha \left( .\right) p(.)}{%
\alpha \left( .\right) +1}}\right\Vert _{\frac{\alpha \left( .\right) +1}{%
\alpha \left( .\right) }}\left\Vert a^{-\frac{\alpha \left( .\right) }{%
\alpha \left( .\right) +1}}\right\Vert _{\alpha \left( .\right) +1}.
\end{eqnarray*}

It is well known that $\varrho _{r(.)}(u)<\infty $ if and only if $%
\left\Vert u\right\Vert _{r(.)}<\infty $. Since $a^{-\alpha \left( .\right)
}\in L^{1}\left( \Omega \right) $, then $\left\Vert a^{-\frac{\alpha \left(
.\right) }{\alpha \left( .\right) +1}}\right\Vert _{\alpha \left( .\right)
+1}<C_{2}<\infty $. Thus, we obtain%
\begin{equation}
\int\limits_{\Omega }\left\vert \nabla u\left( x\right) \right\vert
^{p_{\ast }\left( x\right) }dx\leq C_{3}\left\Vert a^{\frac{\alpha \left(
.\right) }{\alpha \left( .\right) +1}}\left\vert \nabla u\right\vert ^{\frac{%
\alpha \left( .\right) p(.)}{\alpha \left( .\right) +1}}\right\Vert _{\frac{%
\alpha \left( .\right) +1}{\alpha \left( .\right) }}.  \label{1}
\end{equation}%
In general, we can suppose that $\int\limits_{\Omega }\left\vert \nabla
u\left( x\right) \right\vert ^{p_{\ast }\left( x\right) }dx>1.$ Because if $%
\int\limits_{\Omega }\left\vert \nabla u\left( x\right) \right\vert
^{p_{\ast }\left( x\right) }dx\leq 1$, then $\nabla u\in L^{p_{\ast }\left(
.\right) }(\Omega )$ and $u\in L^{p_{\ast }\left( .\right) }(\Omega )$ due
to $p_{\ast }(.)<p(.)$. Thus we have $u\in W^{1,p_{\ast }(.)}\left( \Omega
\right) $. If $\int\limits_{\Omega }a(x)\left\vert \nabla u\right\vert
^{p(x)}dx\leq 1$, then by (\ref{1}) and Proposition \ref{pro 4} we have%
\begin{eqnarray*}
\left\Vert \nabla u\right\Vert _{p_{\ast }}^{\frac{\alpha ^{-}p^{-}}{\alpha
^{+}+1}} &\leq &C_{3}\left\Vert a^{\frac{\alpha \left( .\right) }{\alpha
\left( .\right) +1}}\left\vert \nabla u\right\vert ^{\frac{\alpha \left(
.\right) p(.)}{\alpha \left( .\right) +1}}\right\Vert _{\frac{\alpha \left(
.\right) +1}{\alpha \left( .\right) }} \\
&\leq &C_{3}\left( \int\limits_{\Omega }\left\vert \nabla u\left( x\right)
\right\vert ^{p\left( x\right) }a\left( x\right) dx\right) ^{\frac{\alpha
^{-}}{\alpha ^{+}+1}}\leq C_{3}\left\Vert \nabla u\right\Vert _{p(.),a}^{%
\frac{\alpha ^{-}p^{-}}{\alpha ^{+}+1}}.
\end{eqnarray*}%
This follows%
\begin{equation}
\left\Vert \nabla u\right\Vert _{p_{\ast }}\leq C_{4}\left\Vert \nabla
u\right\Vert _{p(.),a},  \label{2}
\end{equation}%
where $C_{4}=C_{3}^{\frac{\alpha ^{+}+1}{\alpha ^{-}p^{-}}}>0$. On the other
hand, if $\int\limits_{\Omega }a(x)\left\vert \nabla u\right\vert
^{p(x)}dx>1,$ then by (\ref{1}) and Proposition \ref{pro 4} we obtain%
\begin{eqnarray*}
\left\Vert \nabla u\right\Vert _{p_{\ast }}^{\frac{\alpha ^{-}p^{-}}{\alpha
^{+}+1}} &\leq &C_{3}\left\Vert a^{\frac{\alpha \left( .\right) }{\alpha
\left( .\right) +1}}\left\vert \nabla u\right\vert ^{\frac{\alpha \left(
.\right) p(.)}{\alpha \left( .\right) +1}}\right\Vert _{\frac{\alpha \left(
.\right) +1}{\alpha \left( .\right) }} \\
&\leq &C_{3}\left( \int\limits_{\Omega }\left\vert \nabla u\left( x\right)
\right\vert ^{p\left( x\right) }a\left( x\right) dx\right) ^{\frac{\alpha
^{+}}{\alpha ^{-}+1}}\leq C_{3}\left\Vert \nabla u\right\Vert _{p(.),a}^{%
\frac{\alpha ^{+}p^{+}}{\alpha ^{-}+1}},
\end{eqnarray*}%
or equivalently%
\begin{equation}
\left\Vert \nabla u\right\Vert _{p_{\ast }}\leq C_{4}\left\Vert \nabla
u\right\Vert _{p(.),a}^{\beta },  \label{3}
\end{equation}%
where $\beta =\frac{\alpha ^{+}p^{+}}{\alpha ^{-}+1}.\frac{\alpha ^{+}+1}{%
\alpha ^{-}p^{-}}$. If we consider the (\ref{2}) and (\ref{3}), then we have 
$\nabla u\in L^{p_{\ast }\left( .\right) }\left( \Omega \right) .$
Therefore, we have $u\in W^{1,p_{\ast }(.)}\left( \Omega \right) .$ Hence,
the inclusion $W_{a,b}^{1,p(.)}\left( \Omega \right) \subset W^{1,p_{\ast
}(.)}\left( \Omega \right) $ is satisfied. Using the Banach closed graph
theorem, we get%
\begin{equation*}
W_{a,b}^{1,p(.)}\left( \Omega \right) \hookrightarrow W^{1,p_{\ast
}(.)}\left( \Omega \right) .
\end{equation*}%
Since $p_{\ast }^{-}>N$ and $W^{1,p_{\ast }(.)}\left( \Omega \right)
\hookrightarrow W^{1,p_{\ast }^{-}}\left( \Omega \right) $ it follows that $%
W^{1,p_{\ast }^{-}}\left( \Omega \right) \hookrightarrow \hookrightarrow
C\left( \overline{\Omega }\right) $ and $W_{a,b}^{1,p(.)}\left( \Omega
\right) \hookrightarrow \hookrightarrow C\left( \overline{\Omega }\right) $.
This completes the proof.
\end{proof}

\begin{corollary}
\label{cor6}Since $W_{a,b}^{1,p(.)}\left( \Omega \right) \hookrightarrow
\hookrightarrow C\left( \overline{\Omega }\right) $, then there exists a $%
C_{5}>0$ such that 
\begin{equation*}
\left\Vert u\right\Vert _{\infty }\leq C_{5}\left\Vert u\right\Vert
_{1,p(.),a,b}
\end{equation*}%
for any $u\in W_{a,b}^{1,p(.)}\left( \Omega \right) $, where $\left\Vert
u\right\Vert _{\infty }=\underset{x\in \overline{\Omega }}{\sup }u(x)$ for $%
u\in C\left( \overline{\Omega }\right) $.
\end{corollary}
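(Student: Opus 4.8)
The plan is to read off the estimate directly from the definition of a continuous embedding, which is already guaranteed by the compact embedding proved in Theorem \ref{teo5}. Recall that, according to the definition recorded in Section 2, the notation $X\hookrightarrow\hookrightarrow Y$ means both that $X\hookrightarrow Y$ (a continuous embedding) and that the inclusion operator is compact. Thus the very statement of Theorem \ref{teo5} already furnishes the continuous embedding $W_{a,b}^{1,p(.)}\left( \Omega \right) \hookrightarrow C\left( \overline{\Omega }\right) $; the compactness is extra information not needed for the present corollary.

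The next step is to unwind what this continuous embedding means. The inclusion map $I:W_{a,b}^{1,p(.)}\left( \Omega \right) \longrightarrow C\left( \overline{\Omega }\right) $ given by $I(u)=u$ is a linear operator between Banach spaces, and continuity of the embedding is precisely the boundedness of this operator. For a linear map between normed spaces, boundedness is equivalent to the existence of a finite operator norm, i.e. a constant $C_{5}>0$ such that $\left\Vert I(u)\right\Vert _{C\left( \overline{\Omega }\right) }\leq C_{5}\left\Vert u\right\Vert _{1,p(.),a,b}$ for every $u$. Since $I(u)=u$ and the norm on $C\left( \overline{\Omega }\right) $ is the supremum norm $\left\Vert u\right\Vert _{\infty }=\sup_{x\in \overline{\Omega }}\left\vert u(x)\right\vert $, this is exactly the asserted inequality, with $C_{5}$ realized as the operator norm of the inclusion.

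I expect no genuine obstacle: the entire content of the corollary is already contained in Theorem \ref{teo5}, and the argument is just the standard equivalence, for linear operators, between continuity and the existence of an operator-norm bound. The only point worth stating explicitly is that it is this equivalence which converts the qualitative statement ``$W_{a,b}^{1,p(.)}\left( \Omega \right) $ embeds continuously into $C\left( \overline{\Omega }\right) $'' into the quantitative pointwise control of $\left\Vert u\right\Vert _{\infty }$ by $\left\Vert u\right\Vert _{1,p(.),a,b}$ that will be used later in the variational estimates for problem (\ref{P}).
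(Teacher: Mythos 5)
Your proposal is correct and matches the paper's (implicit) reasoning exactly: the paper states this corollary without further proof precisely because, as you observe, the compact embedding of Theorem \ref{teo5} contains the continuous embedding, and continuity of the linear inclusion $I:W_{a,b}^{1,p(.)}\left( \Omega \right) \rightarrow C\left( \overline{\Omega }\right) $ is by definition the existence of the operator-norm bound $\left\Vert u\right\Vert _{\infty }\leq C_{5}\left\Vert u\right\Vert _{1,p(.),a,b}$. No gap; this is the standard and essentially unique argument.
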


For $A\subset \overline{\Omega }$, denote by $\theta ^{-}(A)=\underset{x\in A%
}{\inf }\theta (x)$ and $\theta ^{+}(A)=\underset{x\in A}{\sup }\theta (x)$.
For any $x\in \partial \Omega $ and $r\in C\left( \partial \Omega ,%
\mathbb{R}
\right) $ with $r^{-}=\underset{x\in \partial \Omega }{\inf }r(x)>1,$ we
define%
\begin{equation*}
\theta ^{\partial }\left( x\right) =\left( \theta (x)\right) ^{\partial
}=\left\{ 
\begin{array}{cc}
\frac{\left( N-1\right) \theta (x)}{N-\theta (x)}, & \text{if }\theta \left(
x\right) <N, \\ 
\infty , & \text{if }\theta \left( x\right) \geq N,%
\end{array}%
\right.
\end{equation*}%
\begin{equation*}
\theta _{r(x)}^{\partial }\left( x\right) =\frac{r(x)-1}{r(x)}\theta
^{\partial }\left( x\right) .
\end{equation*}

\begin{theorem}
\label{teo7}(see \cite{de}) Assume that the boundary of $\Omega $ possesses
the cone property and $\theta \in C\left( \overline{\Omega }\right) $ with $%
\theta ^{-}>1$. Suppose that $a\in L^{r(.)}(\partial \Omega )$, $r\in
C\left( \partial \Omega \right) $ with $r(x)>\frac{\theta ^{\partial }\left(
x\right) }{\theta ^{\partial }\left( x\right) -1}$ for all $x\in \partial
\Omega $. If $q\in C\left( \partial \Omega \right) $ and $1\leq q(x)<\theta
_{r(x)}^{\partial }\left( x\right) $ for all $x\in \partial \Omega $, then
there is a compact embedding from $W^{1,\theta (.)}\left( \Omega \right) $
into $L_{a}^{q(.)}(\partial \Omega )$. In particular, there is a compact
embedding from $W^{1,\theta (.)}\left( \Omega \right) $ into $%
L^{q(.)}(\partial \Omega )$, where $1\leq q(x)<\theta ^{\partial }\left(
x\right) $ for all $x\in \partial \Omega $.
\end{theorem}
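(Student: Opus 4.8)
The plan is to derive the weighted boundary embedding from the classical (unweighted) variable-exponent trace theorem by inserting a single H\"older estimate that absorbs the weight $a$. Throughout I set $r'(x)=\frac{r(x)}{r(x)-1}$ for the pointwise conjugate exponent and put $s(x)=r'(x)q(x)$. First I would invoke the standard compact trace embedding for variable exponent Sobolev spaces: under the cone property and with $\theta\in C(\overline{\Omega})$, $\theta^{-}>1$, one has $W^{1,\theta(.)}(\Omega)\hookrightarrow\hookrightarrow L^{t(.)}(\partial\Omega)$ for every $t\in C(\partial\Omega)$ with $1\le t(x)<\theta^{\partial}(x)$ on $\partial\Omega$. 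Taking $t=q$ yields at once the ``in particular'' clause of the theorem, so only the weighted statement requires work.

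Next I would carry out the exponent bookkeeping. Since $r^{-}>1$, the exponent $r'$ is continuous and bounded on $\partial\Omega$, so $s=r'q\in C(\partial\Omega)$ with $s^{-}>1$. Multiplying the hypothesis $q(x)<\theta_{r(x)}^{\partial}(x)=\frac{r(x)-1}{r(x)}\theta^{\partial}(x)$ by $r'(x)$ shows that this is exactly $s(x)<\theta^{\partial}(x)$ for all $x\in\partial\Omega$; moreover the assumption $r(x)>\frac{\theta^{\partial}(x)}{\theta^{\partial}(x)-1}$ is precisely what makes $\theta_{r(x)}^{\partial}(x)>1$, so that the admissible range $1\le q<\theta_{r(x)}^{\partial}$ is nonempty. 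Hence the unweighted compact trace embedding above applies with $t=s$, yielding $W^{1,\theta(.)}(\Omega)\hookrightarrow\hookrightarrow L^{s(.)}(\partial\Omega)$.

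The heart of the matter is then the continuous embedding $L^{s(.)}(\partial\Omega)\hookrightarrow L_{a}^{q(.)}(\partial\Omega)$. For $u\in L^{s(.)}(\partial\Omega)$ the generalized H\"older inequality on $\partial\Omega$ with the conjugate pair $(r(x),r'(x))$ gives
\begin{equation*}
\varrho_{q(.),a}(u)=\int_{\partial\Omega}\vert u(x)\vert^{q(x)}a(x)\,d\sigma\le 2\big\Vert\,\vert u\vert^{q(.)}\big\Vert_{r'(.)}\Vert a\Vert_{r(.)}.
\end{equation*}
Here $\Vert a\Vert_{r(.)}<\infty$ because $a\in L^{r(.)}(\partial\Omega)$, while $\varrho_{r'(.)}\big(\vert u\vert^{q(.)}\big)=\int_{\partial\Omega}\vert u\vert^{q(x)r'(x)}\,d\sigma=\varrho_{s(.)}(u)$; hence the modular--norm inequalities from the preliminaries bound $\big\Vert\,\vert u\vert^{q(.)}\big\Vert_{r'(.)}$ by a power of $\varrho_{s(.)}(u)$, and therefore $\Vert u\Vert_{q(.),a}$ by a power of $\Vert u\Vert_{s(.)}$. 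This establishes the desired bounded inclusion.

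Finally I would conclude by composition. The trace map $W^{1,\theta(.)}(\Omega)\to L^{s(.)}(\partial\Omega)$ is compact and the weighted inclusion $L^{s(.)}(\partial\Omega)\to L_{a}^{q(.)}(\partial\Omega)$ is bounded, so their composite $W^{1,\theta(.)}(\Omega)\to L_{a}^{q(.)}(\partial\Omega)$ is compact; equivalently, a bounded sequence in $W^{1,\theta(.)}(\Omega)$ admits a trace-subsequence converging in $L^{s(.)}(\partial\Omega)$, and applying the H\"older estimate to its differences shows it is Cauchy in $L_{a}^{q(.)}(\partial\Omega)$. The step I expect to be the main obstacle is the combined exponent bookkeeping and the passage between modulars and norms: because variable-exponent norms are not homogeneous, one must track the max/min powers in the modular--norm relations carefully to guarantee that $L^{s(.)}$-convergence genuinely forces $L_{a}^{q(.)}$-convergence, and one must confirm that $s=r'q$ remains strictly below the critical trace exponent $\theta^{\partial}$ pointwise so that the unweighted trace theorem is applicable with $t=s$.
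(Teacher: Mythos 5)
Your proof is correct, and it is essentially the canonical argument for this result: the paper itself gives no proof of Theorem \ref{teo7} (it is quoted verbatim from Deng \cite{de}), and Deng's own proof proceeds exactly as you do, reducing the weighted boundary embedding to the unweighted compact trace embedding $W^{1,\theta (.)}\left( \Omega \right) \hookrightarrow \hookrightarrow L^{s(.)}(\partial \Omega )$ via H\"{o}lder's inequality with the conjugate pair $\left( r(x),r^{\prime }(x)\right) $ and the exponent identity $s(x)=r^{\prime }(x)q(x)<\theta ^{\partial }\left( x\right) $. Your modular--norm bookkeeping and the compact-composed-with-bounded conclusion are both handled correctly, so there is nothing to add.
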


\begin{corollary}
\label{cor8}All conditions in Theorem \ref{teo5} and Theorem \ref{teo7} are
satisfied. If $q\in C\left( \partial \Omega \right) $ and $1\leq
q(x)<p_{\ast ,r(x)}^{\partial }\left( x\right) $, for all $x\in \partial
\Omega $, then we have $W^{1,p_{\ast }(.)}\left( \Omega \right)
\hookrightarrow \hookrightarrow L_{a}^{q(.)}(\partial \Omega )$. This yields
that $W_{a,b}^{1,p(.)}\left( \Omega \right) \hookrightarrow \hookrightarrow
L_{a}^{q(.)}(\partial \Omega )$. Moreover, we obtain $W_{a,b}^{1,p(.)}\left(
\Omega \right) \hookrightarrow W^{1,p_{\ast }(.)}\left( \Omega \right)
\hookrightarrow L^{q(.)}(\partial \Omega )$ for $1\leq q(x)<p_{\ast
}^{\partial }\left( x\right) $ for all $x\in \partial \Omega $.
\end{corollary}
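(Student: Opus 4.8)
The plan is to realize the desired compact boundary embedding as a composition of two maps already available: the continuous interior embedding $W_{a,b}^{1,p(.)}(\Omega)\hookrightarrow W^{1,p_{\ast}(.)}(\Omega)$ established inside the proof of Theorem \ref{teo5}, followed by the compact trace embedding $W^{1,p_{\ast}(.)}(\Omega)\hookrightarrow\hookrightarrow L_{a}^{q(.)}(\partial\Omega)$ supplied by Theorem \ref{teo7}. The only general principle needed beyond these two results is that the composition of a bounded linear operator with a compact operator is again compact.

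First I would apply Theorem \ref{teo7} with the choice $\theta(.)=p_{\ast}(.)$. By assumption all hypotheses of Theorems \ref{teo5} and \ref{teo7} hold; in particular $p_{\ast}\in C(\overline{\Omega})$ with $p_{\ast}^{-}>N>1$, the boundary enjoys the cone property, and $a\in L^{r(.)}(\partial\Omega)$ with $r(x)>\frac{p_{\ast}^{\partial}(x)}{p_{\ast}^{\partial}(x)-1}$. Since the admissible range $1\leq q(x)<p_{\ast,r(x)}^{\partial}(x)$ is exactly $1\leq q(x)<\theta_{r(x)}^{\partial}(x)$ when $\theta=p_{\ast}$, Theorem \ref{teo7} yields the compact embedding $W^{1,p_{\ast}(.)}(\Omega)\hookrightarrow\hookrightarrow L_{a}^{q(.)}(\partial\Omega)$.

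Next I would invoke the continuous embedding $W_{a,b}^{1,p(.)}(\Omega)\hookrightarrow W^{1,p_{\ast}(.)}(\Omega)$ proved in Theorem \ref{teo5}. Composing, take any bounded sequence $(u_{n})$ in $W_{a,b}^{1,p(.)}(\Omega)$; its image is bounded in $W^{1,p_{\ast}(.)}(\Omega)$ by continuity, and the compact trace embedding then extracts a subsequence converging in $L_{a}^{q(.)}(\partial\Omega)$. Hence $W_{a,b}^{1,p(.)}(\Omega)\hookrightarrow\hookrightarrow L_{a}^{q(.)}(\partial\Omega)$, which is the asserted consequence. For the final chain I would repeat the argument with the unweighted ``in particular'' clause of Theorem \ref{teo7}, giving $W^{1,p_{\ast}(.)}(\Omega)\hookrightarrow L^{q(.)}(\partial\Omega)$ for $1\leq q(x)<p_{\ast}^{\partial}(x)$; chaining it after $W_{a,b}^{1,p(.)}(\Omega)\hookrightarrow W^{1,p_{\ast}(.)}(\Omega)$ produces the displayed sequence of embeddings.

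I do not expect a genuine obstacle, as the argument is a transitivity-of-embeddings bookkeeping once $\theta=p_{\ast}$ is substituted. The one point deserving care is checking that the notation $p_{\ast,r(x)}^{\partial}(x)$ coincides with the quantity $\theta_{r(x)}^{\partial}(x)=\frac{r(x)-1}{r(x)}\theta^{\partial}(x)$ produced by the defining formulas under $\theta=p_{\ast}$, so that the range imposed on $q$ matches the hypothesis of Theorem \ref{teo7} verbatim and the compactness genuinely transfers along the composition.
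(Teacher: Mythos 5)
Your proposal is correct and follows exactly the argument the paper intends (the corollary is stated without an explicit proof, but its very phrasing encodes the chain you describe): specialize Theorem \ref{teo7} to $\theta(.)=p_{\ast}(.)$, note that $p_{\ast,r(x)}^{\partial}(x)$ is by definition $\theta_{r(x)}^{\partial}(x)$ for this choice, and compose the resulting compact trace embedding with the continuous embedding $W_{a,b}^{1,p(.)}\left( \Omega \right) \hookrightarrow W^{1,p_{\ast }(.)}\left( \Omega \right)$ established inside the proof of Theorem \ref{teo5}, using that a compact operator following a bounded one is compact. The unweighted chain is handled the same way via the ``in particular'' clause of Theorem \ref{teo7}, so nothing is missing.
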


\begin{theorem}
\label{teo9}(see \cite{ri}) Let $X$ be a separable and reflexive real Banach
space; $\Phi :X\rightarrow 
\mathbb{R}
$ a continuously G\^{a}teaux differentiable and sequentially weakly lower
semicontinuous functional whose G\^{a}teaux derivative admits a continuous
inverse on $X^{\ast }$; $\Psi :X\rightarrow 
\mathbb{R}
$ a continuously G\^{a}teaux differentiable functional whose G\^{a}teaux
derivative is compact. Assume that

\begin{enumerate}
\item[\textit{(i)}] $\underset{\left\Vert u\right\Vert \rightarrow \infty }{%
\lim }\left( \Phi (u)+\lambda \Psi (u)\right) =\infty $ for all $\lambda >0,$

\item[\textit{(ii)}] there are $r\in 
\mathbb{R}
$ and $u_{0},u_{1}\in X$ such that $\Phi (u_{0})<r<\Phi (u_{1}),$

\item[\textit{(iii)}] $\underset{u\in \Phi ^{-1}\left( \left( -\infty ,r%
\right] \right) }{\inf }\Psi (u)>\frac{\left( \Phi (u_{1})-r\right) \Psi
(u_{0})+\left( r-\Phi (u_{0})\right) \Psi (u_{1})}{\Phi (u_{1})-\Phi (u_{0})}%
.$

Then there exist an open interval $\Lambda \subset \left( 0,\infty \right) $
and a positive constant $\rho >0$ such that for any $\lambda \in \Lambda $
the equation $\Phi ^{\prime }\left( u\right) +\lambda \Psi ^{\prime }\left(
u\right) =0$ has at least three solutions in $X$ whose norms are less than $%
\rho $.
\end{enumerate}
\end{theorem}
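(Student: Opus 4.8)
The plan is to treat this as a purely abstract statement in critical point theory and to prove it by combining a variational principle that produces two local minima of the energy $J_{\lambda}:=\Phi+\lambda\Psi$ with a minimax argument that produces a third critical point between them. First I would record the structural properties of $J_{\lambda}$. Since $\Psi^{\prime}$ is compact, it maps weakly convergent sequences to strongly convergent ones, whence $\Psi$ is sequentially weakly continuous: if $u_{n}\rightharpoonup u$, then writing $\Psi(u_{n})-\Psi(u)=\int_{0}^{1}\langle \Psi^{\prime}(u+t(u_{n}-u)),u_{n}-u\rangle\,dt$ and pairing the strongly convergent factor $\Psi^{\prime}(u+t(u_{n}-u))\to\Psi^{\prime}(u)$ with $u_{n}-u\rightharpoonup 0$, the integrand tends to $0$ and dominated convergence gives $\Psi(u_{n})\to\Psi(u)$. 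Together with the sequential weak lower semicontinuity of $\Phi$ this makes $J_{\lambda}$ sequentially weakly lower semicontinuous; by hypothesis (i) it is coercive, so for every $\lambda>0$ it attains a global minimum, which is one critical point.

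Next I would verify the Palais--Smale condition for $J_{\lambda}$. A Palais--Smale sequence $(u_{n})$ has bounded energy, hence is bounded by coercivity, so up to a subsequence $u_{n}\rightharpoonup u$; compactness of $\Psi^{\prime}$ forces $\Phi^{\prime}(u_{n})=J_{\lambda}^{\prime}(u_{n})-\lambda\Psi^{\prime}(u_{n})$ to converge strongly in $X^{\ast}$, and since $\Phi^{\prime}$ admits a continuous inverse, $u_{n}=(\Phi^{\prime})^{-1}(\Phi^{\prime}(u_{n}))$ converges strongly. Thus $J_{\lambda}$ is a coercive $C^{1}$ functional satisfying (PS). The heart of the argument is then to extract, for $\lambda$ ranging over a suitable open interval $\Lambda$, a second local minimum of $J_{\lambda}$ distinct from the global one, and here I would invoke Ricceri's general variational principle for the pair $(\Phi,\Psi)$. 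The geometric content of (ii)--(iii) is a chord condition: setting $t=(r-\Phi(u_{0}))/(\Phi(u_{1})-\Phi(u_{0}))\in(0,1)$, the right-hand side of (iii) equals $(1-t)\Psi(u_{0})+t\Psi(u_{1})$, i.e. the ordinate at abscissa $r$ of the segment joining $(\Phi(u_{0}),\Psi(u_{0}))$ and $(\Phi(u_{1}),\Psi(u_{1}))$ in the $(\Phi,\Psi)$-plane, so (iii) asserts that $\inf_{\Phi\le r}\Psi$ lies strictly above that chord. This strict separation is precisely what the variational principle converts into a nonempty open interval $\Lambda\subset(0,\infty)$ such that, for each $\lambda\in\Lambda$, the restriction of $J_{\lambda}$ to the sublevel set $\Phi^{-1}((-\infty,r))$ attains a minimum that is a genuine local minimum of $J_{\lambda}$ on $X$, while the chord estimate forces the global minimum to lie elsewhere; this yields two distinct local minima $v_{0},v_{1}$.

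With two local minima in hand and (PS) established, I would produce the third critical point by the mountain pass theorem in its form valid when a functional has two local minima (the Pucci--Serrin / Ghoussoub--Preiss mechanism): the minimax level $c=\inf_{\gamma}\max_{s}J_{\lambda}(\gamma(s))$ over paths $\gamma$ joining $v_{0}$ to $v_{1}$ is a critical value whose associated critical point is distinct from both minima. Hence $\Phi^{\prime}(u)+\lambda\Psi^{\prime}(u)=0$ has at least three solutions. Finally, the uniform bound $\rho$ is obtained by taking $\Lambda$ bounded: on the compact closure $\overline{\Lambda}$ the coercivity of $J_{\lambda}$ is uniform, the two local minima lie in a fixed sublevel set of $\Phi$, and the mountain pass level is bounded above by $\max(J_{\lambda}(v_{0}),J_{\lambda}(v_{1}))$ plus the cost of a fixed connecting path, so coercivity bounds the norms of all three critical points by a single constant $\rho$ independent of $\lambda\in\Lambda$.

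The main obstacle will be the extraction of the two local minima in the second step: converting the scalar chord inequality (iii) into an interval $\Lambda$ of parameters for which $J_{\lambda}$ genuinely has a local minimum interior to the sublevel set while retaining a distinct global minimum. This is exactly the content of Ricceri's variational principle, and carrying it out rigorously requires a careful study of the function $\rho\mapsto\inf_{\Phi\le\rho}\Psi$ and of the minimax identity $\sup_{\lambda}\inf_{u}(\Phi+\lambda\Psi)=\inf_{u}\sup_{\lambda}(\cdots)$ underlying the principle; the weak lower semicontinuity, coercivity, and (PS) secured in the first two steps are precisely what make that machinery and the subsequent minimax argument applicable.
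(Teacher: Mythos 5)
You should first be aware that the paper contains no proof of this statement: Theorem \ref{teo9} is quoted from Ricceri's article \cite{ri} and used as a black box to establish the paper's main theorem, so your attempt can only be judged as a self-contained argument. As such, it has a genuine gap at its core. The parts you actually carry out --- sequential weak lower semicontinuity and coercivity of $J_{\lambda}=\Phi+\lambda\Psi$, the Palais--Smale condition via compactness of $\Psi'$ and continuity of $(\Phi')^{-1}$, the reading of (iii) as a strict chord inequality, the Pucci--Serrin two-minima mountain-pass step, and the uniform bound $\rho$ over a bounded interval $\Lambda$ --- correctly reproduce the known architecture of Ricceri's own proof. But the decisive step, namely converting the chord inequality (iii) into a nonempty \emph{open} interval $\Lambda\subset(0,\infty)$ such that $J_{\lambda}$ has two distinct local minima for every $\lambda\in\Lambda$, is precisely the step you do not perform: you invoke ``Ricceri's general variational principle'' as an unproved black box and concede in your last paragraph that executing it rigorously requires a study you have not done. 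Appealing to that principle without proof is not materially different from appealing to the three critical points theorem itself, which is what the paper does; as a proof, the proposal is therefore incomplete exactly where the theorem's content lies.

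There is also a technical misstatement in your first step. For a \emph{nonlinear} map such as $\Psi':X\rightarrow X^{\ast}$, compactness (continuity plus mapping bounded sets into relatively compact sets) does \emph{not} imply that weakly convergent sequences are sent to strongly convergent ones; for instance $T(u)=\varphi\left(\left\Vert u\right\Vert\right)v_{0}$ on $\ell^{2}$, with $\varphi$ continuous and $\varphi(1)\neq\varphi(0)$, is compact but fails this along $u_{n}=e_{n}\rightharpoonup 0$. Hence your claim $\Psi'(u+t(u_{n}-u))\rightarrow\Psi'(u)$ is unjustified from the hypotheses. The conclusion you want (sequential weak continuity of $\Psi$) is nevertheless true, but the correct argument uses only precompactness: the values $\Psi'(u+t(u_{n}-u))$ lie in a fixed norm-compact set $K\subset X^{\ast}$, and a weakly null sequence converges to $0$ uniformly on norm-compact subsets of the dual, so $\sup_{0\leq t\leq 1}\left\vert \left\langle \Psi'(u+t(u_{n}-u)),u_{n}-u\right\rangle \right\vert \rightarrow 0$. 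The same remark applies to your (PS) verification: from precompactness one should extract a strongly convergent subsequence of $\left(\Psi'(u_{n})\right)$ rather than assert convergence to $\Psi'(u)$; that weaker statement is all the argument needs there.
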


\begin{proposition}
\label{pro10}(see \cite{li}) Let us consider the functional $\Phi
(u)=\int\limits_{\Omega }\frac{1}{p(x)}\left( a(x)\left\vert \nabla
u\right\vert ^{p(x)}+b(x)\left\vert u\right\vert ^{p(x)}\right) dx$ for all $%
u\in W_{a,b}^{1,p(.)}\left( \Omega \right) $. Then, we have

\begin{enumerate}
\item[\textit{(i)}] $\Phi :W_{a,b}^{1,p(.)}\left( \Omega \right)
\longrightarrow 
\mathbb{R}
$ is sequentially weakly lower semicontinuous and $\Phi \in C^{1}\left(
W_{a,b}^{1,p(.)}\left( \Omega \right) ,%
\mathbb{R}
\right) $. Moreover, the derivative operator $\Phi ^{\prime }$ of $\Phi $
define as%
\begin{equation*}
\left\langle \Phi ^{\prime }\left( u\right) ,v\right\rangle
=\int\limits_{\Omega }\left( a\left( x\right) \left\vert \nabla u\right\vert
^{p\left( x\right) -2}\nabla u\nabla v+b\left( x\right) \left\vert
u\right\vert ^{p\left( x\right) -2}uv\right) dx
\end{equation*}%
for all $u,v\in W_{a,b}^{1,p(.)}\left( \Omega \right) $.

\item[\textit{(ii)}] $\Phi ^{\prime }:W_{a,b}^{1,p(.)}\left( \Omega \right)
\longrightarrow W_{a^{\ast },b^{\ast }}^{-1,q(.)}\left( \Omega \right) $ is
a continuous, bounded and strictly monotone operator.

\item[\textit{(iii)}] $\Phi ^{\prime }$ is a mapping of type $\left(
S_{+}\right) ,$ i.e., if $u_{n}\rightharpoonup u$ in $W_{a,b}^{1,p(.)}\left(
\Omega \right) $ and $\underset{n\longrightarrow \infty }{\lim \sup }%
\left\langle \Phi ^{\prime }\left( u_{n}\right) -\Phi ^{\prime }\left(
u\right) ,u_{n}-u\right\rangle \leq 0$, then $u_{n}\longrightarrow u$ in $%
W_{a,b}^{1,p(.)}\left( \Omega \right) .$

\item[\textit{(iv)}] $\Phi ^{\prime }:W_{a,b}^{1,p(.)}\left( \Omega \right)
\longrightarrow W_{a^{\ast },b^{\ast }}^{-1,q(.)}\left( \Omega \right) $ is
a homeomorphism.
\end{enumerate}
\end{proposition}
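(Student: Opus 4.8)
The plan is to prove the four assertions in the order stated, each building on its predecessors. For (i), I would first verify G\^{a}teaux differentiability with the announced formula by differentiating under the integral sign: writing the difference quotient $\frac{1}{t}\left(\Phi(u+tv)-\Phi(u)\right)$ and applying the mean value theorem to the map $s\mapsto \frac{1}{p(x)}|s|^{p(x)}$, one obtains an integrand dominated by an integrable majorant, where the uniform bounds on $u,v$ coming from the compact embedding $W_{a,b}^{1,p(.)}(\Omega)\hookrightarrow\hookrightarrow C(\overline{\Omega})$ of Theorem \ref{teo5} are convenient; dominated convergence then gives the stated expression for $\langle \Phi'(u),v\rangle$. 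Continuity of $\Phi'$ (hence $\Phi\in C^{1}$) follows from the continuity of the associated Nemytskii operators together with the modular--norm inequalities. Sequential weak lower semicontinuity is immediate once one observes that $s\mapsto \frac{1}{p(x)}|s|^{p(x)}$ is convex for $p(x)>1$, so that $\Phi$ is convex; being also strongly continuous, it is sequentially weakly lower semicontinuous.

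For (ii), continuity and boundedness of $\Phi'$ are consequences of (i) and the growth estimates in Proposition \ref{pro 4}. The essential point is strict monotonicity, which I would derive from the elementary inequality $\left(|\xi|^{p-2}\xi-|\eta|^{p-2}\eta\right)\cdot(\xi-\eta)>0$, valid for $\xi\neq\eta$ and $p>1$. Applying it pointwise with $\xi=\nabla u(x)$, $\eta=\nabla v(x)$ (weighted by $a(x)>0$) and to the scalar lower-order term (weighted by $b(x)>0$) shows that $\langle \Phi'(u)-\Phi'(v),u-v\rangle$ is a sum of two nonnegative integrals that is strictly positive whenever $u\neq v$.

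For (iii), assume $u_{n}\rightharpoonup u$ in $W_{a,b}^{1,p(.)}(\Omega)$ with $\limsup_{n}\langle \Phi'(u_{n})-\Phi'(u),u_{n}-u\rangle\leq 0$. Monotonicity from (ii) forces this bracket to tend to $0$, and since it is the sum of a nonnegative gradient integral and a nonnegative lower-order integral, each tends to $0$ separately. I would then invoke the Simon-type inequalities, distinguishing the sets $\{x:p(x)\geq 2\}$ and $\{x:1<p(x)<2\}$, to bound $\int_{\Omega}a(x)|\nabla(u_{n}-u)|^{p(x)}\,dx$ and $\int_{\Omega}b(x)|u_{n}-u|^{p(x)}\,dx$ by these vanishing quantities; on the subquadratic set the inequality must be combined with a H\"older estimate using the uniform boundedness of the modulars. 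The two modulars then vanish, and the modular--norm relation yields $\|u_{n}-u\|_{1,p(.),a,b}\to 0$.

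Finally, for (iv), $\Phi'$ is continuous, bounded and strictly monotone by (ii), and it is coercive because $\langle \Phi'(u),u\rangle=\int_{\Omega}\left(a(x)|\nabla u|^{p(x)}+b(x)|u|^{p(x)}\right)dx\geq \|u\|_{1,p(.),a,b}^{p^{-}}$ for $\|u\|_{1,p(.),a,b}\geq 1$ by Proposition \ref{pro 4}, so that $\langle \Phi'(u),u\rangle/\|u\|_{1,p(.),a,b}\to\infty$. The Minty--Browder theorem then makes $\Phi'$ a bijection of the reflexive space $W_{a,b}^{1,p(.)}(\Omega)$ onto its dual, with injectivity provided by strict monotonicity; continuity of $(\Phi')^{-1}$ follows from the $(S_{+})$ property of (iii), since for $f_{n}\to f$ in the dual the preimages $u_{n}=(\Phi')^{-1}(f_{n})$ are bounded by coercivity, a weakly convergent subsequence together with $(S_{+})$ converges strongly, and continuity of $\Phi'$ identifies its limit as $(\Phi')^{-1}(f)$. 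I expect the subquadratic case $1<p(x)<2$ in part (iii) to be the main obstacle, since there the Simon inequality does not directly dominate $|\xi-\eta|^{p}$ and must be paired with a H\"older estimate controlled by the boundedness of the modulars.
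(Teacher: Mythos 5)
Your proposal is correct, but note that the paper itself offers no proof of this proposition at all: it is quoted verbatim from the reference \cite{li} (Liu--Liu), so there is nothing internal to compare against. What you have written is essentially the standard Fan--Zhang/Liu--Liu argument that the cited source uses: convexity plus norm continuity for the sequential weak lower semicontinuity, the elementary vector inequality $\left( \left\vert \xi \right\vert ^{p-2}\xi -\left\vert \eta \right\vert ^{p-2}\eta \right) \cdot \left( \xi -\eta \right) >0$ for strict monotonicity, the Simon inequalities split over $\left\{ p(x)\geq 2\right\} $ and $\left\{ 1<p(x)<2\right\} $ (with the H\"{o}lder/bounded-modular trick in the subquadratic case) for the $\left( S_{+}\right) $ property, and Minty--Browder plus $\left( S_{+}\right) $ for the homeomorphism claim. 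All of this survives the passage to the weighted setting because $a,b>0$ simply ride along pointwise in each inequality.

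One small imprecision worth flagging in part (i): the compact embedding $W_{a,b}^{1,p(.)}\left( \Omega \right) \hookrightarrow \hookrightarrow C\left( \overline{\Omega }\right) $ controls $u$ and $v$ uniformly but says nothing about $\nabla u$ and $\nabla v$, so it cannot supply the integrable majorant for the gradient term in the difference quotient. The correct majorant comes from Young's inequality: the mean-value-theorem bound $a(x)\left( \left\vert \nabla u\right\vert +\left\vert \nabla v\right\vert \right) ^{p(x)-1}\left\vert \nabla v\right\vert \leq C\,a(x)\left[ \left( \left\vert \nabla u\right\vert +\left\vert \nabla v\right\vert \right) ^{p(x)}+\left\vert \nabla v\right\vert ^{p(x)}\right] $ is integrable precisely because $\left\vert \nabla u\right\vert ,\left\vert \nabla v\right\vert \in L_{a}^{p(.)}(\Omega )$. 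With that substitution your dominated convergence argument, and the rest of the proof, is sound.
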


\section{Main Results}

In this paper, we assume that $a^{-\frac{1}{p(.)-1}}\in L_{loc}^{1}\left(
\Omega \right) $ and $b^{-\frac{1}{p(.)-1}}\in L_{loc}^{1}\left( \Omega
\right) $, $a^{-\alpha \left( .\right) }\in L^{1}\left( \Omega \right) $
with $\alpha \left( x\right) \in \left( \frac{N}{p(x)},\infty \right) \cap %
\left[ \frac{1}{p(x)-1},\infty \right) $ and $N<p_{\ast }^{-}$. Moreover,
the function $f:$ $\partial \Omega \times 
\mathbb{R}
\rightarrow 
\mathbb{R}
$ is a Carath\'{e}odory function and satisfies the following conditions:

\begin{enumerate}
\item[$(F1)$] $\left\vert f(x,t)\right\vert \leq k(x)+c\left\vert
t\right\vert ^{s(x)-1}$ for all $(x,t)\in \partial \Omega \times 
\mathbb{R}
$ where $k(x)\in L^{\frac{s(x)}{s(x)-1}}(\partial \Omega )$, $k(x)\geq 0$
and $s(x)\in C_{+}\left( \partial \Omega \right) $, $1<s^{-}=\underset{x\in 
\overline{\Omega }}{\inf }s(x)\leq s^{+}=\underset{x\in \overline{\Omega }}{%
\sup }s(x)<p^{-}$ with $s(x)<p_{\ast }^{\partial }\left( x\right) $, for all 
$x\in \partial \Omega .$

\item[$(F2)$] \textit{(i)} $f(x,t)<0$ for all $(x,t)\in \partial \Omega
\times 
\mathbb{R}
,$ when $\left\vert t\right\vert \in \left( 0,1\right) $,

\textit{(ii)} $f(x,t)\geq M>0,$ when $\left\vert t\right\vert \in \left(
t_{0},\infty \right) $, $t_{0}>1$.
\end{enumerate}

\begin{definition}
We call that $f\in W_{a,b}^{1,p(.)}\left( \Omega \right) $ is a weak
solution of the problem (\ref{P}) if%
\begin{equation*}
\int\limits_{\Omega }a\left( x\right) \left\vert \nabla u\right\vert
^{p\left( x\right) -2}\nabla u\nabla vdx+\int\limits_{\Omega }b\left(
x\right) \left\vert u\right\vert ^{p\left( x\right) -2}uvdx-\lambda
\int\limits_{\partial \Omega }f(x,u)vd\sigma =0
\end{equation*}%
for all $v\in W_{a,b}^{1,p(.)}\left( \Omega \right) .$ The corresponding
energy functional of the problem (\ref{P})%
\begin{equation*}
J(u)=\Phi (u)+\lambda \Psi (u)
\end{equation*}%
where the functionals $\Phi ,\Psi $ from $W_{a,b}^{1,p(.)}\left( \Omega
\right) $ into $%
\mathbb{R}
$ as $\Phi (u)=\int\limits_{\Omega }\frac{1}{p(x)}\left( a(x)\left\vert
\nabla u\right\vert ^{p(x)}+b(x)\left\vert u\right\vert ^{p(x)}\right) dx$, $%
\Psi (u)=-\int\limits_{\partial \Omega }F(x,u)d\sigma $ and $%
F(x,t)=\int\limits_{0}^{t}f(x,y)dy$.

By \cite[Proposition 3.1]{mih} and Proposition \ref{pro 4}, we get $\Phi
,\Psi \in C^{1}\left( W_{a,b}^{1,p(.)}\left( \Omega \right) ,%
\mathbb{R}
\right) $ with the derivatives given by%
\begin{eqnarray*}
\left\langle \Phi ^{\prime }\left( u\right) ,v\right\rangle
&=&\int\limits_{\Omega }\left( a\left( x\right) \left\vert \nabla
u\right\vert ^{p\left( x\right) -2}\nabla u\nabla v+b\left( x\right)
\left\vert u\right\vert ^{p\left( x\right) -2}uv\right) dx, \\
\left\langle \Psi ^{\prime }\left( u\right) ,v\right\rangle
&=&-\int\limits_{\partial \Omega }f(x,u)vd\sigma ,
\end{eqnarray*}%
for any $u,v\in W_{a,b}^{1,p(.)}\left( \Omega \right) .$ Since $\Phi
^{\prime }:W_{a,b}^{1,p(.)}\left( \Omega \right) \longrightarrow W_{a^{\ast
},b^{\ast }}^{-1,q(.)}\left( \Omega \right) $ is a homeomorphism by
Proposition \ref{pro 4}, it is obvious that $\left( \Phi ^{\prime }\right)
^{-1}:W_{a^{\ast },b^{\ast }}^{-1,q(.)}\left( \Omega \right) \longrightarrow
W_{a,b}^{1,p(.)}\left( \Omega \right) $ exists and continuous. Moreover, $%
\Psi ^{\prime }:W_{a,b}^{1,p(.)}\left( \Omega \right) \longrightarrow
W_{a^{\ast },b^{\ast }}^{-1,q(.)}\left( \Omega \right) $ is completely
continuous due to the assumption $(F1)$ in \cite[Theorem 2.9]{al}, which
implies $\Psi ^{\prime }:W_{a,b}^{1,p(.)}\left( \Omega \right)
\longrightarrow W_{a^{\ast },b^{\ast }}^{-1,q(.)}\left( \Omega \right) $ is
compact.

We note that the operator $J$ is a $C^{1}\left( W_{a,b}^{1,p(.)}\left(
\Omega \right) ,%
\mathbb{R}
\right) $ functional and the critical points of $J$ are weak solutions of
the problem (\ref{P}).
\end{definition}

Now, we are ready to give our main result.

\begin{theorem}
If the conditions $(F1)$ and $(F2)$ are valid, then there exist an open
interval $\Lambda \subset \left( 0,\infty \right) $ and a positive constant $%
\rho >0$ such that for any $\lambda \in \Lambda $, the problem (\ref{P}) has
at least three solutions in $W_{a,b}^{1,p(.)}\left( \Omega \right) $ whose
norms are less than $\rho $.
\end{theorem}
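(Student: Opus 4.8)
**The plan is to verify the three hypotheses of Ricceri's variational principle (Theorem \ref{teo9}) for the functionals $\Phi$ and $\Psi$ defined in the excerpt.** The framework is already in place: $X = W_{a,b}^{1,p(.)}(\Omega)$ is separable and reflexive, $\Phi$ is $C^1$ and sequentially weakly lower semicontinuous with $\Phi'$ admitting a continuous inverse (by Proposition \ref{pro10}), and $\Psi'$ is compact (established after the definition of weak solution). So the structural requirements on the operators are met; what remains is to produce the coercivity condition (i) and the two geometric/variational inequalities (ii) and (iii). I would need to exploit the compact embedding $W_{a,b}^{1,p(.)}(\Omega) \hookrightarrow\hookrightarrow L^{q(.)}(\partial\Omega)$ from Corollary \ref{cor8} together with the growth condition $(F1)$ and the sign condition $(F2)$.

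First, for condition (i), I would verify that $\Phi(u) + \lambda\Psi(u) \to \infty$ as $\|u\|_{1,p(.),a,b} \to \infty$ for every $\lambda > 0$. Using Proposition \ref{pro 4}, for $\|u\|_{1,p(.),a,b} \geq 1$ one has $\Phi(u) \geq \frac{1}{p^+}\|u\|_{1,p(.),a,b}^{p^-}$. For the $\Psi$ term, I would integrate $(F1)$ to bound $F(x,t)$, giving $|F(x,t)| \leq k(x)|t| + \frac{c}{s(x)}|t|^{s(x)}$, then use the trace embedding and the Hölder inequality on $\partial\Omega$ to estimate $\Psi(u) = -\int_{\partial\Omega} F(x,u)\,d\sigma$ from below by something controlled by $\|u\|_{1,p(.),a,b}^{s^+}$ (up to lower-order terms). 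The key point is that $s^+ < p^-$, so the negative contribution of $\lambda\Psi(u)$ grows strictly slower than $\Phi(u)$, and the sum is coercive. This is the routine part.

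**The main obstacle will be conditions (ii) and (iii), which require an explicit choice of $r$, $u_0$, and $u_1$.** The natural choice is $u_0 = 0$, so that $\Phi(u_0) = 0$ and $\Psi(u_0) = -\int_{\partial\Omega} F(x,0)\,d\sigma = 0$. For $u_1$ I would select a suitable constant function (or a function taking a large constant value near the boundary), say $u_1 \equiv t_1$ with $t_1 > t_0 > 1$; here the sign condition $(F2)$ becomes essential. Condition $(F2)$(ii), namely $f(x,t) \geq M > 0$ for $|t| > t_0$, forces $F(x,t_1) = \int_0^{t_1} f(x,y)\,dy$ to be large and positive for $t_1$ large, so $\Psi(u_1) = -\int_{\partial\Omega} F(x,u_1)\,d\sigma$ becomes very negative; meanwhile $(F2)$(i) controls the behavior on $(0,1)$ so the integral does not cancel. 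I would then pick $r$ strictly between $\Phi(u_0) = 0$ and $\Phi(u_1)$ to satisfy (ii), and the delicate step is verifying the strict inequality in (iii).

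For condition (iii), with $u_0 = 0$ the right-hand side simplifies to $\frac{r\,\Psi(u_1)}{\Phi(u_1)}$ (since $\Psi(u_0) = 0$), which is negative because $\Psi(u_1) < 0$. The left-hand side is $\inf_{u \in \Phi^{-1}((-\infty,r])} \Psi(u)$; I would bound this infimum from below using Corollary \ref{cor6}, which gives $\|u\|_\infty \leq C_5\|u\|_{1,p(.),a,b}$, so that the sublevel set $\Phi^{-1}((-\infty,r])$ consists of functions with uniformly small sup-norm when $r$ is chosen small. The strategy is to choose $r$ small enough that every $u$ in this sublevel set satisfies $\|u\|_\infty < 1$, whence $(F2)$(i) gives $f(x,u) < 0$ and thus $F(x,u) \leq 0$, forcing $\Psi(u) = -\int_{\partial\Omega} F(x,u)\,d\sigma \geq 0$ on the whole sublevel set; then the infimum on the left is $\geq 0$, strictly exceeding the negative right-hand side. **Reconciling this smallness requirement on $r$ with the requirement $r < \Phi(u_1)$ from (ii)—i.e., showing a single $r$ works for both—is the crux**, and it is handled by taking $t_1$ large (making $\Phi(u_1)$ and $|\Psi(u_1)|$ large) while keeping $r$ fixed and small. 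Once all three hypotheses hold, Theorem \ref{teo9} yields the open interval $\Lambda$ and the constant $\rho$, and since the critical points of $J = \Phi + \lambda\Psi$ are exactly the weak solutions of (\ref{P}), the conclusion follows.
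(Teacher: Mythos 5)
Your proposal is correct and follows essentially the same route as the paper: the same coercivity estimate for condition (i) using $s^{+}<p^{-}$, the same choices $u_{0}=0$ and $u_{1}\equiv\gamma$ (a large constant) for condition (ii), and the same mechanism for condition (iii), namely choosing $r$ small so that Corollary \ref{cor6} forces $\left\Vert u\right\Vert _{\infty }\leq \beta <1$ on the sublevel set $\Phi ^{-1}\left( \left( -\infty ,r\right] \right) $, whence $(F2)$ makes $\Psi \geq 0$ there while the right-hand side $r\Psi (u_{1})/\Phi (u_{1})$ is strictly negative. Your ``crux'' of reconciling the small $r$ with $r<\Phi (u_{1})$ is resolved in the paper exactly as you suggest, by fixing $r<\frac{1}{p^{+}}$ and taking $\gamma $ large enough that $\gamma ^{p^{-}}\left\Vert b\right\Vert _{1}>1$.
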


\begin{proof}
To prove this theorem , we first verify the condition $(i)$ of Theorem \ref%
{teo9}. In fact, by Proposition \ref{pro10} we have%
\begin{eqnarray}
\Phi (u) &\geq &\frac{1}{p^{+}}\int\limits_{\Omega }\left( a(x)\left\vert
\nabla u\right\vert ^{p(x)}+b(x)\left\vert u\right\vert ^{p(x)}\right) dx 
\notag \\
&=&\frac{1}{p^{+}}I(u)\geq \frac{1}{p^{+}}\left\Vert u\right\Vert
_{1,p(.),a,b}^{p^{-}}  \label{5}
\end{eqnarray}%
for any $u\in W_{a,b}^{1,p(.)}\left( \Omega \right) $ with $\left\Vert
u\right\Vert _{1,p(.),a,b}>1$.

On the other hand, by $(F1)$ and the H\"{o}lder inequality, we get 
\begin{eqnarray}
-\Psi (u) &=&\int\limits_{\partial \Omega }F(x,u)d\sigma
=\int\limits_{\partial \Omega }\left( \int\limits_{0}^{u(x)}f(x,t)dt\right)
d\sigma  \notag \\
&\leq &\int\limits_{\partial \Omega }\left( k(x)\left\vert u(x)\right\vert +%
\frac{c}{s(x)}\left\vert u(x)\right\vert ^{s(x)}\right) d\sigma  \notag \\
&\leq &2\left\Vert k\right\Vert _{\frac{s(.)}{s(.)-1},\partial \Omega
}\left\Vert u\right\Vert _{s(.),\partial \Omega }+\frac{c}{s^{-}}%
\int\limits_{\partial \Omega }\left\vert u(x)\right\vert ^{s(x)}d\sigma .
\label{6}
\end{eqnarray}%
By Corollary \ref{cor8}, it is obtained that $W_{a,b}^{1,p(.)}\left( \Omega
\right) \hookrightarrow L^{s(.)}(\partial \Omega )$ and%
\begin{equation}
\int\limits_{\partial \Omega }\left\vert u(x)\right\vert ^{s(x)}d\sigma \leq
\max \left\{ \left\Vert u\right\Vert _{s(.),\partial \Omega
}^{s^{-}},\left\Vert u\right\Vert _{s(.),\partial \Omega }^{s^{+}}\right\}
\leq C_{6}\left\Vert u\right\Vert _{1,p(.),a,b}^{s^{+}}.  \label{7}
\end{equation}%
If we use (\ref{6}) and (\ref{7}), then we get%
\begin{equation}
-\Psi (u)\leq C_{7}\left\Vert k\right\Vert _{\frac{s(.)}{s(.)-1},\partial
\Omega }\left\Vert u\right\Vert _{1,p(.),a,b}+\frac{c}{s^{-}}C_{6}\left\Vert
u\right\Vert _{1,p(.),a,b}^{s^{+}}.  \label{8}
\end{equation}%
For any $\lambda >0$ we can obtain%
\begin{equation*}
\Phi (u)+\lambda \Psi (u)\geq \frac{1}{p^{+}}\left\Vert u\right\Vert
_{1,p(.),a,b}^{p^{-}}-\lambda C_{7}\left\Vert k\right\Vert _{\frac{s(.)}{%
s(.)-1},\partial \Omega }\left\Vert u\right\Vert _{1,p(.),a,b}-\frac{c}{s^{-}%
}\lambda C_{6}\left\Vert u\right\Vert _{1,p(.),a,b}^{s^{+}}
\end{equation*}%
by (\ref{5}) and (\ref{8}). Since $1<s^{+}<p^{-}$, then $\underset{%
\left\Vert u\right\Vert _{1,p(.),a,b}\rightarrow \infty }{\lim }\left( \Phi
(u)+\lambda \Psi (u)\right) =\infty $ for all $\lambda >0$ and \textit{(i)}
is verified.

By $\frac{\partial F(x,t)}{\partial t}=f(x,t)$ and $(F2)$, it is obtained
that $F(x,t)$ is increasing for $t\in \left( t_{0},\infty \right) $ and
decreasing for $t\in \left( 0,1\right) $, uniformly with respect to $x\in
\partial \Omega $, and $F(x,0)=0$. In addition, $F(x,t)\rightarrow \infty $
when $t\rightarrow \infty $ due to $F(x,t)\geq Mt$ uniformly for $x$. Then
there exists a real number $\delta >t_{0}$ such that 
\begin{equation}
F(x,t)\geq 0=F(x,0)\geq F(x,\tau ),\text{ \ for all }x\in \partial \Omega ,%
\text{ }t>\delta ,\text{ }\tau \in \left( 0,1\right) .  \label{9}
\end{equation}%
Let $\beta ,\gamma $ be two real numbers such that $0<\beta <\min \left\{
1,C_{5}\right\} $, where $C_{5}$ is given in Corollary \ref{cor6}, and $%
\gamma >\delta $ $(\gamma >1)$ satisfies $\gamma ^{p^{-}}\left\Vert
b\right\Vert _{1}>1$. If we use (\ref{9}), we have $F(x,t)\leq F(x,0)=0$ for 
$t\in \left[ 0,\beta \right] $, and%
\begin{equation}
\int\limits_{\partial \Omega }\sup_{0\leq t\leq \beta }F(x,t)d\sigma \leq
\int\limits_{\partial \Omega }F(x,0)d\sigma =0.  \label{10}
\end{equation}%
Moreover, due to $\gamma >\delta $ and (\ref{9}) we obtain $%
\int\limits_{\partial \Omega }F(x,\delta )d\sigma >0$ and 
\begin{equation}
\frac{1}{C_{5}^{p^{+}}}\frac{\beta ^{+}}{\gamma ^{p^{-}}}\int\limits_{%
\partial \Omega }F(x,\delta )d\sigma >0.  \label{11}
\end{equation}%
If we consider (\ref{10}) and (\ref{11}), then we get%
\begin{equation*}
\int\limits_{\partial \Omega }\sup_{0\leq t\leq a}F(x,t)d\sigma \leq 0<\frac{%
1}{C_{5}^{p^{+}}}\frac{\beta ^{+}}{\gamma ^{p^{-}}}\int\limits_{\partial
\Omega }F(x,\delta )d\sigma .
\end{equation*}%
Define $u_{0},u_{1}\in W_{a,b}^{1,p(.)}\left( \Omega \right) $ with $%
u_{0}(x)=0$ and $u_{1}(x)=\gamma $ for any $x\in \Omega .$ If we take $r=%
\frac{1}{p^{+}}\left( \frac{\beta }{C_{5}}\right) ^{p^{+}}$, then $r\in
\left( 0,1\right) ,$ $\Phi (u_{0})=\Psi (u_{0})=0$ and 
\begin{eqnarray*}
\Phi (u_{1}) &=&\int\limits_{\Omega }\frac{1}{p(x)}b(x)\gamma ^{p(x)}dx\geq 
\frac{\gamma ^{p^{-}}}{p^{+}}\int\limits_{\Omega }b(x)dx=\frac{1}{p^{+}}%
\gamma ^{p^{-}}\left\Vert b\right\Vert _{1} \\
&\geq &\frac{1}{p^{+}}>r.
\end{eqnarray*}%
Thus we have $\Phi (u_{0})<r<\Phi (u_{1})$ and%
\begin{equation*}
\Psi (u_{1})=-\int\limits_{\partial \Omega }F(x,u_{1})d\sigma
=-\int\limits_{\partial \Omega }F(x,\gamma )d\sigma <0.
\end{equation*}%
Then \textit{(ii)} of Theorem \ref{teo9} is verified.

On the other hand, we have%
\begin{eqnarray*}
-\frac{\left( \Phi (u_{1})-r\right) \Psi (u_{0})+\left( r-\Phi
(u_{0})\right) \Psi (u_{1})}{\Phi (u_{1})-\Phi (u_{0})} &=&-r\frac{\Psi
(u_{1})}{\Phi (u_{1})} \\
&=&r\frac{\int\limits_{\partial \Omega }F(x,\gamma )d\sigma }{%
\int\limits_{\Omega }\frac{1}{p(x)}b(x)\gamma ^{p(x)}dx}>0.
\end{eqnarray*}%
Now, we consider the case $u\in W_{a,b}^{1,p(.)}\left( \Omega \right) $ with 
$\Phi (u)\leq r<1.$ Due to $\frac{1}{p^{+}}I(u)\leq \Phi (u)\leq r$, we have 
\begin{equation*}
I(u)\leq p^{+}r=\left( \frac{\beta }{C_{5}}\right) ^{p^{+}}<1.
\end{equation*}%
By Proposition \ref{pro 4} we get $\left\Vert u\right\Vert _{1,p(.),a,b}<1$
and 
\begin{equation*}
\frac{1}{p^{+}}\left\Vert u\right\Vert _{1,p(.),a,b}^{p^{+}}\leq \frac{1}{%
p^{+}}I(u)\leq \Phi (u)\leq r.
\end{equation*}%
If we consider Corollary \ref{cor6}, then we get 
\begin{equation*}
\left\vert u(x)\right\vert \leq C_{5}\left\Vert u\right\Vert
_{1,p(.),a,b}\leq C_{5}\left( p^{+}r\right) ^{\frac{1}{p^{+}}}=\beta
\end{equation*}%
for all $u\in W_{a,b}^{1,p(.)}\left( \Omega \right) $ and $x\in \Omega $
with $\Phi (u)\leq r.$

This follows that%
\begin{equation*}
-\underset{u\in \Phi ^{-1}\left( \left( -\infty ,r\right] \right) }{\inf }%
\Psi (u)=\underset{u\in \Phi ^{-1}\left( \left( -\infty ,r\right] \right) }{%
\sup }-\Psi (u)\leq \int\limits_{\partial \Omega }\sup_{0\leq t\leq \beta
}F(x,t)d\sigma \leq 0.
\end{equation*}%
Then we have%
\begin{equation*}
-\underset{u\in \Phi ^{-1}\left( \left( -\infty ,r\right] \right) }{\inf }%
\Psi (u)<r\frac{\int\limits_{\partial \Omega }F(x,\gamma )d\sigma }{%
\int\limits_{\Omega }\frac{1}{p(x)}b(x)\gamma ^{p(x)}dx}
\end{equation*}%
and%
\begin{equation*}
\underset{u\in \Phi ^{-1}\left( \left( -\infty ,r\right] \right) }{\inf }%
\Psi (u)>\frac{\left( \Phi (u_{1})-r\right) \Psi (u_{0})+\left( r-\Phi
(u_{0})\right) \Psi (u_{1})}{\Phi (u_{1})-\Phi (u_{0})}.
\end{equation*}%
Thus condition \textit{(iii)} of Theorem \ref{teo9} is obtained. This
completes the proof.
\end{proof}

\bigskip

\bigskip

\end{document}